\crefname{section}{section}{sections}
\crefname{subsection}{subsection}{subsections}
\Crefname{section}{Section}{Sections}
\Crefname{subsection}{Subsection}{Subsections}
\Crefname{figure}{Figure}{Figures}
\newtheorem{theorem}{Theorem}[section]
\newtheorem{definition}[theorem]{Definition}
\crefname{definition}{Definition}{Definitions}
\newtheorem{corollary}[theorem]{Corollary}
\crefname{corollary}{Corollary}{Corollaries}
\newtheorem{proposition}[theorem]{Proposition}
\crefname{proposition}{Proposition}{Propositions}
\newtheorem{observation}[theorem]{Observation}
\crefname{observation}{Observation}{Observations}
\newtheorem{hypothesis}[theorem]{Hypothesis}
\crefname{hypothesis}{Hypothesis}{Hypotheses}
\theoremstyle{definition}
\newtheorem{remark}[theorem]{Remark}
\newcommand{\email}[1]{\protect\href{mailto:#1}{#1}}
\numberwithin{algorithm}{section}
\title{Learning iterated function systems from time series of partial observations}
\author{Emilia Gibson\footnote{Department of Mathematics, Imperial College London, London SW7 2AZ, UK
  (\email{emilia.gibson19@imperial.ac.uk} , \email{jeroen.lamb@imperial.ac.uk}).}
   \ and Jeroen S.W. Lamb\footnotemark[1]\text{ }\footnote{International Research Center for Neurointelligence, The University of Tokyo, Tokyo, 113-0033 Japan.}\text{ }\footnote{Centre for Applied Mathematics and Bioinformatics, Department of Mathematics and Natural Sciences, Gulf University for Science and Technology, Halwally, 32093 Kuwait.}
}
\begin{document}
\maketitle

\begin{abstract}
We develop a methodology to learn finitely generated random iterated function systems from time-series of partial observations using delay embeddings.
We obtain a minimal model representation for the observed dynamics, using a hidden variable representation, that is diffeomorphic to the original system.
\end{abstract}

\section{Introduction}
In data-driven modelling of dynamical systems, learning equations of motion from time-series observations is a central objective and many methodologies have been developed to learn ordinary and partial differential equations (and their discrete time analogues), as well as transfer and Koopman operators in these settings.

In recent decades, there has been a steadily growing interest in dynamical systems whose equations of motion contain a probabilistic element, so-called \textit{random dynamical systems} (RDSs). 
These kind of dynamical systems represent dynamical systems driven by ``noise'', reductions of (higher dimensional) complex dynamical systems where ignored degrees of freedom are represented in a probabilistic way, or express models with an inherent uncertainty.  
Models with noise are state-of-the-art in topical applications in the life sciences, physical sciences, social sciences and finance
\cite{kobayashi2011probability}.
Whereas admittedly the most common modelling framework for random dynamical systems is stochastic (partial) differential equations (S(P)DEs), there are many other types of random dynamical systems (that are  more suitable than S(P)DEs in certain contexts). 

In this paper we address the task of learning one of the most elementary types of random dynamical systems: finitely generated random iterated function systems. 
A random iterated function system refers to a discrete-time dynamical system, whose evolution consists of the sequential application of self-maps 
(functions) on a (metric) state space, where the selection process of the sequence of maps is governed by probability. 
We assume that the number of functions is finite and the probabilistic process determining the sequence(s) is that of an irreducible finite state Markov Chain (MC), 
with the states of this MC representing the generating functions.  

Let $\Sigma_{P}^+$ denote an irreducible finite ($k$) state Markov chain with $k\times k$ transition matrix $P$, and
let the sequence $\omega:=\omega_0\omega_1\omega_2\ldots$ denote a sample. 
Then, we consider the time-evolution of the random iterated function system generated by a finite set of functions with compact domain $M$
\begin{equation}\label{eq: ifs maps}
f_i:M\to M, ~i=1,\ldots, k.
\end{equation}
Given a sample $\omega\in\Sigma^+_P$ and an initial condition $x_0\in M$, the evolution of the system is given by  
\begin{equation}\label{ifs}
x_{n+1} = f_{\omega_n}(x_n),~n=0,1,2,\ldots.
\end{equation}
The transition matrix $P$ governs the probability of the occurrence of different types of sequences $\omega$.
We refer to a random dynamical system of the above type as a \textit{finitely generated random Iterated Function System}, and in abbreviation as IFS.\footnote{IFSs have been studied extensively in the case that all the maps $f_i$ are contractions, see e.g. \cite{barnsley2012fractals}, but in a broader RDS setting (as in this paper), we do not require any properties of the functions $f_i$ other than some mild smoothness and non-degeneracy conditions.}
The functions $f_1,\dots,f_k$ are thus referred to as the generators of the IFS. 

We consider the setting in which the trajectories of the IFS $x_0x_1x_2x_3\ldots$ are observed through a smooth observable $\psi:M\to \mathbb{R}^s$, and the task of learning the ground truth IFS from observations $z_0z_1z_2z_3\ldots$, where $z_i:=\psi(x_i)$.
Since from observations through the lens of $\psi$, we have inherently no knowledge about the coordinate representation of the IFS on $M$, the task of \textit{learning} at best produces an IFS that is diffeomorphic to the the ground truth IFS and accurately (re)produces the time-series of observations. 
Of course, the extent to which one is able to learn the system also intrinsically depends on the amount of time-series data we have available, but we will not dwell on estimating the amount of data needed to obtain an approximation with given tolerance, so our abstract results refer to the idealisation in which we have unlimited data.

Our main result is summarized as follows:
\begin{theorem}\label{th:main}
Under mild smoothness and non-degeneracy conditions,  a finitely generated random iterated function system can be learnt from time-series of partial observations. 
\end{theorem}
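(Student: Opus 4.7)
The plan is to recast the pair (IFS, Markov chain) as a single skew-product dynamical system on the extended state space $\hat M := M \times \{1,\ldots,k\}$, where the latent discrete coordinate encodes the current MC state and the dynamics is $\hat F(x,i;\omega) = (f_i(x),\omega_1)$. The observation $\psi$ lifts trivially to $\hat\psi(x,i) := \psi(x)$, and the hidden component $i$ is precisely what must ultimately be reconstructed from the observed time series $z_0 z_1 z_2\ldots$ together with the continuous state $x$.

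The main technical step is a Takens-type delay embedding theorem, adapted to forced/random systems in the spirit of Stark and Sauer--Yorke--Casdagli, applied to the skew product $\hat F$. The goal is to show that, generically in $\psi$ and under mild non-degeneracy on the $f_i$, there exists a delay length $N$ for which the delay map
\[
\Phi_N(x,i;\omega) := \bigl(\psi(x),\psi(f_{i}(x)),\psi(f_{\omega_1}\circ f_i(x)),\ldots,\psi(f_{\omega_{N-2}}\circ\cdots\circ f_i(x))\bigr)
\]
is, for almost every noise realisation $\omega$, a smooth embedding of $\hat M$ (or the attractor of $\hat F$) into $\mathbb{R}^{sN}$, with $N\geq 2(\dim M+1)+1$ sufficing generically.

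Once this embedding is in place, the image $\Phi_N(\hat M)$ carries a reconstructed dynamics that is smoothly conjugate to the skew product. From the data $z_0 z_1\ldots$ one forms delay vectors $Z_n = (z_n,\ldots,z_{n+N-1})$ which trace an orbit on $\Phi_N(\hat M)$. Because $i$ takes finitely many values and each branch is smooth, the image decomposes into finitely many smooth sheets indexed by $i$, and the latent MC state is read off by assigning $Z_n$ to its sheet. This clusters the observed one-step transitions $Z_n\mapsto Z_{n+1}$ by the value of $\omega_n$, from which each generator $f_i$ can be learnt as a smooth map on the reconstructed state space, while the transition matrix $P$ is estimated from empirical frequencies of sheet-to-sheet transitions. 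The resulting learnt IFS is by construction diffeomorphic to the ground truth via $\Phi_N$ restricted to the attractor.

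The hardest step will be the embedding result itself. Unlike the classical deterministic Takens setting, the delay coordinates depend on the random realisation $\omega$, so the genericity arguments (prevalence or transversality in an appropriate function space of observables) must simultaneously deliver injectivity and full rank of $\Phi_N$ for almost every $\omega$. A secondary obstacle is ensuring that the sheet decomposition is genuinely resolvable: distinct latent pairs $(x,i)\neq(x',i')$ must not collapse to the same delay vector. This is where a non-degeneracy hypothesis distinguishing the generators---essentially that no two $f_i$ coincide on the invariant set---must be imposed, together with irreducibility of $\Sigma_P^+$ ensuring that every hidden state is visited infinitely often almost surely, so that each $f_i$ is statistically identifiable.
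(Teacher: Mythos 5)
There is a genuine gap, and it sits exactly where the paper has to do its real work. Your delay map $\Phi_N(x,i;\omega)$ depends not just on the latent symbol $i$ but on the whole word $(i,\omega_1,\ldots,\omega_{N-2})$, so the image in delay space decomposes into up to $k^{N-1}$ sheets indexed by words of length $N-1$, \emph{not} into $k$ sheets indexed by the current MC state as you claim. Assigning $Z_n$ to its sheet therefore does not ``read off the latent MC state'': it gives you an anonymous sheet label, and you still face the combinatorial problem of (i) deciding which word each sheet corresponds to and (ii) recovering the original $k$-state chain and its transition matrix from the transition graph on the sheets. This is precisely what the paper's Section 3 solves: the sheet sequence is a \emph{time-delay embedded Markov chain}, and Algorithm 3.1 together with Theorem 3.6 reconstructs the bijection $\phi$ between sheets and $(l-1)$-tuples, and hence $G_X$ and $\omega$, from the elementary circuits of the sheet transition graph. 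Your proposal assumes this step away.

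The second gap is in the learning step. Because the delay coordinates of $Z_{n+1}$ depend on the $N$ most recent symbols, the one-step map $Z_n\mapsto Z_{n+1}$ is $F_{\omega_n,\ldots,\omega_{n+N-1}}$, a different smooth map for each admissible word of length $N$; there is no single map ``$f_i$'' on the reconstructed space that you can fit by clustering transitions on $\omega_n$ alone, and the paper explicitly notes that these word-indexed maps cannot in general be disentangled into compositions of $k$ single-symbol generators in the delay space. Fitting one map per word gives a non-parsimonious model with up to $k^{N}$ generators, which is not the statement being proved. The paper circumvents this by fitting a $k$-generator hidden-variable model (Section 4.2) constrained to reproduce the observed coordinate, and its Theorem 4.1 is the piece of theory guaranteeing that \emph{any} such model is bundle diffeomorphic (not merely diffeomorphic --- the conjugacy depends on $\omega$) to the ground truth. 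Your outline of the embedding step itself is in the right spirit of Stark's Takens theorem for IFSs, but without the TDEMC unembedding and the hidden-variable identification argument the proof does not go through.
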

This result is constructive and builds on delay-embeddings for iterated function systems \cite{stark}, \cite{IFSchannels}, hidden variable inference \cite{stepaniants2024discovering} and the identifiability of delay-embedded finite state MCs.
The precise conditions are given in \cref{hyp: embedding}. 

\begin{figure}
    \centering
    \includegraphics[width=\textwidth, trim = 1cm 16cm 1cm 0cm]{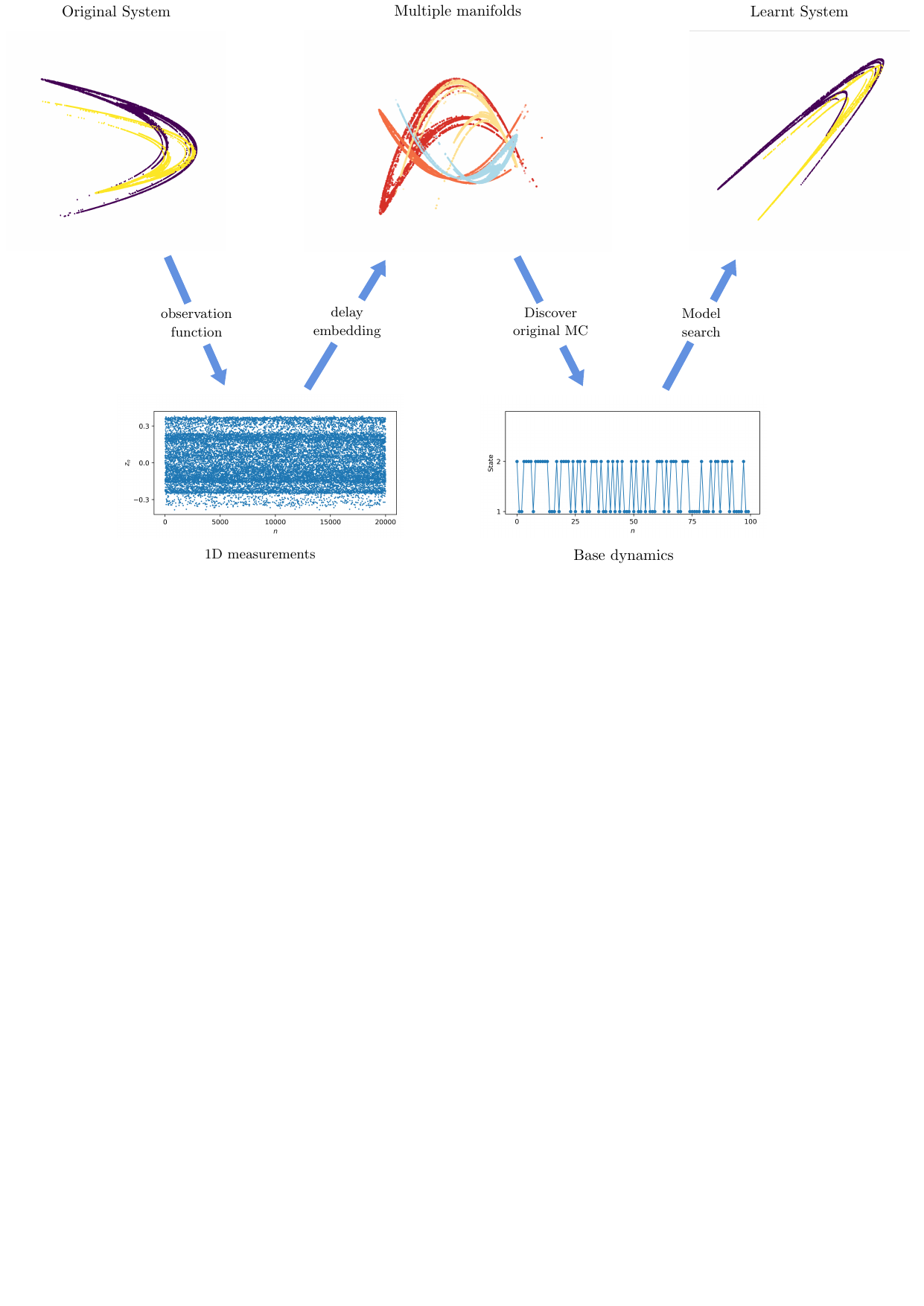}
    \caption{Flow diagram sketch of our approach towards learning IFSs, with reference to the H\'enon IFS defined in \cref{subsection: Henon IFS}.
    Points identified as iterates under different generators are shown in different colours.
    \textbf{Top, left}: a sample trajectory from the underlying system, which here consists of two generating functions.
    \textbf{Bottom, left}: scalar observation data.
    \textbf{Top, middle}: the 1D measurements delay embedded in $\mathbb{R}^3$. In the delay space there are multiple manifolds appearing, here four, which we separate. 
    Using transitions between different manifolds, we discover the sequence of generators from the sampled trajectory.
    \textbf{Bottom, right}: The discovered sequence of generators used in the sample trajectory.
    We fit an IFS model to the observed time series and its discovered sequence of generators.
    \textbf{Top, right}: A sample trajectory from the learnt IFS model.
    }
    \label{fig: schematic}
\end{figure}
We sketch our approach  in \cref{fig: schematic}, with some illustrations of data from an example that we discuss later on in this paper (H\'enon IFS, \cref{subsection: Henon IFS}). 
Our methodology consists of three stages:
\begin{itemize}
\item Finding a delay embedding of the observations so as to reveal the existence of a finite number of generators of the delay-embedded IFS, with associated transition probabilities.
\item Reconstruct the MC structure of the ground truth IFS from the MC structure of the delay-embedded IFS.
\item Construct generators for an IFS that is bundle diffeomorphic to the ground truth IFS using hidden variable representations.
\end{itemize}
We develop the methodology step by step in the following sections: \cref{section: multi-manifold clustering,section: TDEMCS,section: identifying conjugate mappings}, with each section dedicated to one of the three stages listed above.
In these sections, we discuss the feasibility of each of these steps, as well as the theory that underlies \cref{th:main}.
Taken together, \cref{section: multi-manifold clustering,section: TDEMCS,section: identifying conjugate mappings} provide the foundation for \cref{th:main}, for further details see \cref{proof: main}.
In \cref{section: numerics} we provide some numerical examples to demonstrate the potential of our approach, and follow up with conclusions in \cref{section: conclusions}.


\section{Detection and separation of generators}\label{section: multi-manifold clustering}

We first discuss how an IFS presents itself in time series data. 
This question was previously studied in \cite{detectingIFS} for the case of full observations ($\psi=Id$). 
We propose an alternative framework involving multi-manifold learning, that is well-suited for general observation functions.

\subsection{IFSs and multi-manifold learning}\label{subsection: MMC}

Consider first the setting of full observations, i.e., $\psi = Id$.
Given the existence of $k$ different generators, a natural first step is to divide the observations $\{x_n\}_{n\ge0}$ into $k$ clusters, so that each point is identified as an iterate under one of the different generators. 
An intuitive approach is to reference the graphs of the different functions in the IFS.
We assume the following.

\begin{hypothesis}\label{hyp: transversal}
We assume that $M$ is a $r$-dimensional manifold and that the set of generators in \cref{eq: ifs maps} are $C^1$ smooth and transversal.
\end{hypothesis}

We map time series data to the graphs of the different functions by introducing the lag-1 vectors 
$(x_n,x_{n+1})$.
Then clustering points in $M\times M$ which belong to a common manifold together (see literature on multi-manifold clustering for possible algorithms, e.g. \cite{wang2010multi}) and using relation \cref{ifs}, we recover the sample $\omega$ which generated the data.
Expressions for each of the functions in the IFS are found using standard function fitting techniques.
This straightforward approach can be very effective.
Multi-manifold clustering methods work best when data points are evenly distributed across each of the different manifolds.
Note that prior knowledge of $k$ is not essential.

\cref{hyp: transversal} ensures that each graph is a smooth topological manifold in the product space, and that these graphs are far apart in the Whitney $C^1$ topology and can hence be separated.
Instead of requiring that the set of generators to be transversal, it suffices to assume that the different generators are separable in the Whitney $C^q$ topology for some $q\ge 1$ finite. 

\subsubsection{Family of logistic maps}
Consider the 1D IFS consisting of 3 logistic maps:
\begin{equation*}f_1(x) = 3 x(1-x), \quad f_2(x) = 3.5 x(1-x), \quad f_3(x) = 4 x(1-x),\end{equation*}
where each map has an equal probability of being chosen at any given time step.
The lagged vectors $(x_n,x_{n+1}) \in [0,1]^2$ of a sample trajectory are displayed in \cref{fig: Lift in space.}.
Points are confined to the three graphs of the different functions. 
Using the different cluster, we then 
recover the sequence of generators $\omega$ of the time series.
\begin{figure}
    \centering
    \includegraphics[width=0.5\textwidth]{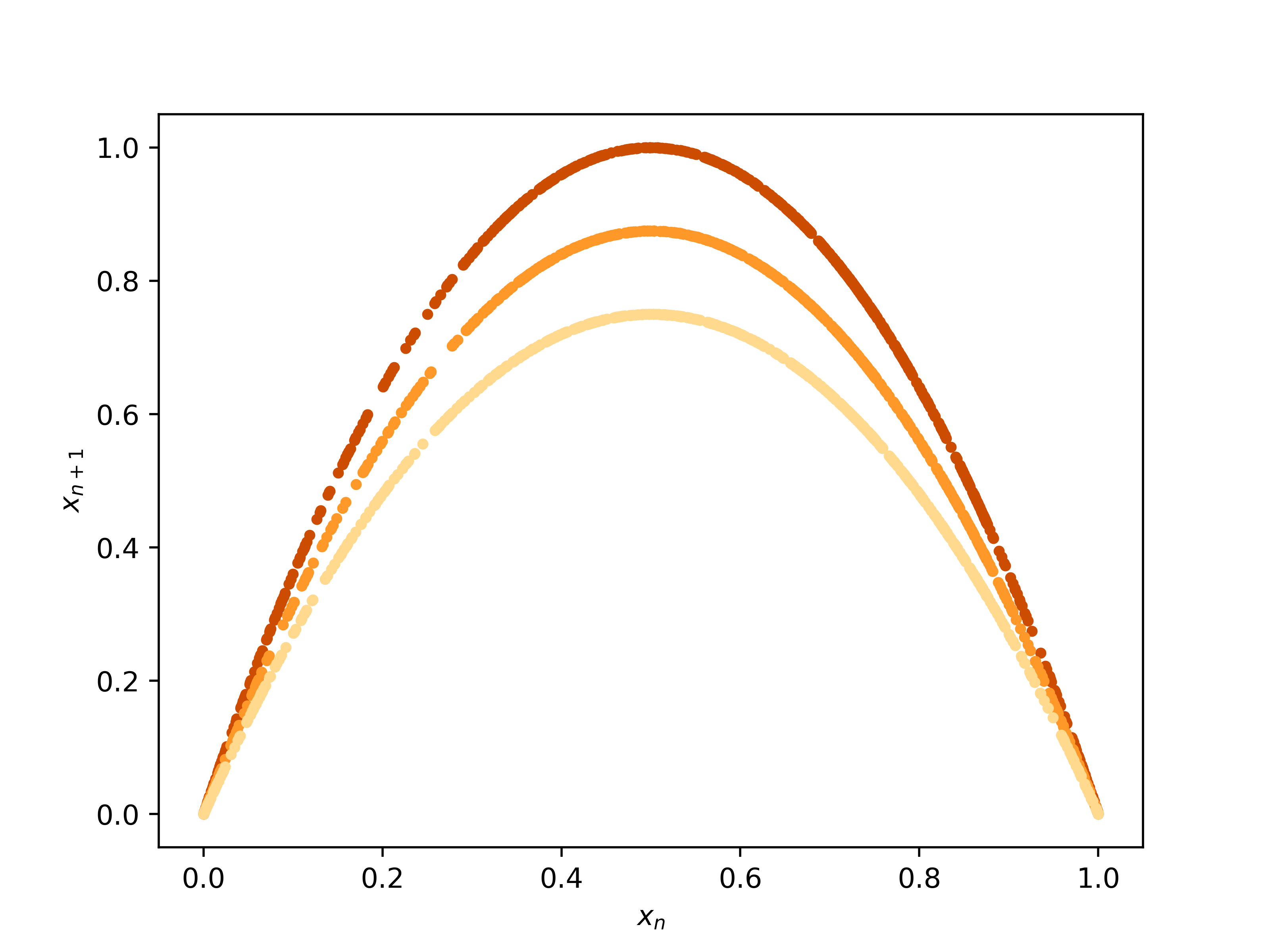}
    \caption{(Family of logistic maps). A lag-1 plot of a sample trajectory consisting of 2200 steps. 
    The lag-1 vectors lie on a collection of manifolds, representing the graphs of the functions in the IFS, shown in different colours.}
    \label{fig: Lift in space.}
\end{figure}

\subsection{Delay embeddings of IFSs}\label{subsection: delay embeddings}
In the previous subsection, we assumed that we could measure the entire state space. 
This is a strict requirement, and in practical settings, is often an unrealistic assumption.
We relax this condition and assume instead that we observe the system through some measurement function $\psi:M\to \mathbb{R}^s$.
We think of these `partial observations' as a mapping to a lower dimensional space.

In the deterministic setting it is often assumed that the ground truth can be reconstructed from partial observations. 
A standard approach for reconstructing the ground truth system is to use time-delay embeddings \cite{PackardCrutchfield,takens}.
In previous work by J. Stark and D. S. Broomhead, it was shown that a ground truth IFS can also be reconstructed using time-delay embeddings.
Our contribution is to use the reconstructed system $\mathbf{z}_n(l):= (z_{n},z_{n+1},\dots,z_{n+l-1})$ to find an analytic model for the original system.
To simplify notation, we formulate the results for continuous scalar observables $\psi:M\to\mathbb{R}$, but these results readily extended to higher dimensional observations \cite[Remark 2.9]{sauer1991embedology}.

Given an IFS \cref{ifs}, the delay vector $\mathbf{z}_n(l)$ depends on the observation $x_n$ as follows:
\begin{equation}
\mathbf{z}_n(l) = \left(\psi(x_n), \psi ( f_{\omega_{n}}(x_n)), \dots, \psi(f_{\omega_{n+l-2}}\circ\dots\circ f_{\omega_{n}}(x_n))\right).
\end{equation}
Hence, $\mathbf{z}_n(l)$ depends on $\omega$ through $\Omega_n:=(\omega_n,\omega_{n+1},\dots,\omega_{n+l-2})$. 
Let $\Phi_{\psi,f,\Omega_n}:M\to\mathbb{R}^l$ be the delay coordinate map given by
\begin{equation}\label{eq: delay embedding map}
    \Phi_{\psi,f,\Omega_n}(x_n) = \left(\psi(x_n), \psi ( f_{\omega_{n}}(x_n)), \dots, \psi(f_{\omega_{n+l-2}}\circ\dots\circ f_{\omega_{n}}(x_n))\right),
\end{equation}
then we write each delay vector in terms of its delay map as $\mathbf{z}_n(l) = \Phi_{\psi,f,\Omega_n}(x_n)$.
A generic IFS delay embedded in $\mathbb{R}^{l}$ admits up to $k^{l-1}$ different delay maps, and each map has an associated $(l-1)$-tuple $\underline{i}\in\Sigma_P^{l-1}$. 

Let $M$ be a smooth compact $r$-dimensional manifold. 
It follows from the embedding result in \cite{stark}, which we state below for completeness, that if $l\ge2d+1$, then each delay map $\Phi_{f,\psi,\underline{i}}$ is generically an embedding.
\begin{theorem}[Takens' theorem for IFSs \cite{stark}]\label{thm: stark}
    Let $M$ be a $1\le r$-dimensional smooth compact manifold. 
    Let $C^1(M,\mathbb{R})$ be the space of $C^1$ real-valued functions on $M$ and $D(M)$ be the space of diffeomorphisms of $M$.
    Suppose $l\ge2r+1$. Then there exists an open and dense set of $(\psi,f) \in C^1(M)\times\left[D(M)\right]^k$ such that, for any $(\psi,f)$ in this set, $\Phi_{\psi,f,\underline{i}}$ is an embedding for every $\underline{i}\in\Sigma_P^{l-1}$.
\end{theorem}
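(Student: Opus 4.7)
My plan is to adapt the parametric transversality proof of the classical Takens embedding theorem. Since $\Sigma_P^{l-1}$ is finite (at most $k^{l-1}$ sequences), it suffices to prove, for each fixed $\underline{i} = (i_0, \ldots, i_{l-2})$, that the set $\mathcal{U}_{\underline{i}}$ of pairs $(\psi, f) \in C^1(M) \times [D(M)]^k$ for which $\Phi_{\psi,f,\underline{i}}$ is an embedding is open and dense; the desired set is then $\bigcap_{\underline{i}} \mathcal{U}_{\underline{i}}$. Writing $g_0^{\underline{i}} := \mathrm{Id}_M$ and $g_j^{\underline{i}} := f_{i_{j-1}} \circ \cdots \circ f_{i_0}$ for $1 \le j \le l-1$, we have $\Phi_{\psi,f,\underline{i}}(x) = \bigl(\psi(g_j^{\underline{i}}(x))\bigr)_{j=0}^{l-1}$. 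Openness is immediate: $M$ is compact, the subset of $C^1$-embeddings is open in $C^1(M, \mathbb{R}^l)$, and $(\psi, f) \mapsto \Phi_{\psi,f,\underline{i}}$ is continuous in the Whitney $C^1$ topology.

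For density, let $\Delta \subset M \times M$ denote the diagonal and consider the evaluation
\[
E \colon (M \times M \setminus \Delta) \times C^1(M) \times [D(M)]^k \longrightarrow \mathbb{R}^l, \qquad (x, y, \psi, f) \longmapsto \Phi_{\psi,f,\underline{i}}(x) - \Phi_{\psi,f,\underline{i}}(y).
\]
I would verify that $E$ is transverse to $\{0\}$ by $\psi$-perturbations alone: a bump function supported near a single orbit point $g_j^{\underline{i}}(x)$ and disjoint from the remaining $2l - 1$ orbit points produces an arbitrary infinitesimal change in the $j$-th coordinate of $E$ without affecting the others, so the $l$ such coordinate directions together span $\mathbb{R}^l$. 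The parametric transversality theorem then yields a residual (hence dense) subset of $(\psi, f)$ for which the restriction $E_{\psi,f}$ is transverse to $\{0\}$ on $M \times M \setminus \Delta$; since $\dim(M \times M \setminus \Delta) = 2r < l$, this forces $E_{\psi,f}^{-1}(0) = \emptyset$, i.e.\ $\Phi_{\psi,f,\underline{i}}$ is injective. The immersion condition is obtained by the analogous argument applied to the tangent evaluation $(x, v, \psi, f) \mapsto D\Phi_{\psi,f,\underline{i}}(x) v$ on the unit tangent bundle, whose dimension $2r - 1$ is also strictly less than $l$.

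The main obstacle is the consistency issue caused by orbit coincidences: the same generator may occur several times in $\underline{i}$, and the forward orbits of $x$ and $y$ under the compositions $g_j^{\underline{i}}$ may share points, in which case a $\psi$-bump at a coincidence alters several coordinates of $E$ simultaneously and the spanning argument above breaks down. To sidestep this I would first establish, by a separate transversality argument using perturbations of the $f_i$, that the exceptional locus where any two of the $2l$ orbit points $\{g_j^{\underline{i}}(x), g_j^{\underline{i}}(y)\}_{j=0}^{l-1}$ collide is contained in a finite union of submanifolds of $M \times M \setminus \Delta$ of positive codimension; a direct dimension count shows that, for $l \ge 2r + 1$, this locus is too small to contribute to $E_{\psi,f}^{-1}(0)$. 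Intersecting over the finite collection $\underline{i} \in \Sigma_P^{l-1}$ preserves both openness and density and completes the argument.
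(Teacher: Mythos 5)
The paper does not actually prove this theorem: it is quoted ``for completeness'' and attributed to \cite{stark}, so there is no in-paper argument to compare yours against. Your overall strategy --- reduce to a single fixed $\underline{i}$ by finiteness, get openness from compactness of $M$, and get density by parametric transversality applied to the difference map $E$ on $M\times M\setminus\Delta$ and to the tangent evaluation on the unit tangent bundle --- is the standard route to Takens-type theorems and is essentially the strategy of the cited literature.

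However, your handling of the exceptional locus has a genuine gap, and it sits exactly at the hard point of every proof of this kind. On the set where orbit points coincide (most importantly where $x$ or $y$ is periodic with low period under the relevant compositions $g_j^{\underline{i}}$), the $\psi$-bump perturbations no longer span $\mathbb{R}^l$, so $E$ is not transverse to $\{0\}$ there and the parametric transversality theorem tells you nothing about $E_{\psi,f}^{-1}(0)$ on that locus. Your proposed fix --- show the locus has positive codimension and conclude by a ``direct dimension count'' that it is too small to contribute to $E_{\psi,f}^{-1}(0)$ --- does not work: smallness of a set never prevents $E_{\psi,f}^{-1}(0)$ from meeting it, or indeed from being entirely contained in it, because the zero set is controlled by transversality only where transversality holds. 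Concretely, if $x\ne y$ are both fixed by every generator occurring in $\underline{i}$, then $\Phi_{\psi,f,\underline{i}}(x)=(\psi(x),\dots,\psi(x))$ and likewise for $y$; the achievable $\psi$-perturbations of $E$ sweep out only the diagonal line of $\mathbb{R}^l$, and $(x,y)\in E_{\psi,f}^{-1}(0)$ whenever $\psi(x)=\psi(y)$ --- no dimension count excludes this. What is needed (and what the Takens/Huke/Stark arguments actually supply) is a preliminary genericity statement for the $f_i$ ensuring that points of period at most $l-1$ of the relevant compositions are finite in number and have distinct eigenvalues, followed by a separate direct argument at those finitely many points: perturb $\psi$ to separate its values there (injectivity), and use the eigenvalue condition for the immersion property, where your unit-tangent-bundle argument fails for the same coincidence reason ($D\psi$ is evaluated at a single base point, so it cannot independently control the $l$ components of $D\Phi_{\psi,f,\underline{i}}(x)v$ unless the vectors $Dg_j^{\underline{i}}(x)v$ are suitably independent). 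Away from the low-period locus your transversality argument is sound; with this periodic-point analysis added, the proof closes.
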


The dynamics in the delay space takes the form
\begin{equation}\label{eq: delay dynamics}\mathbf{z}_{n+1}(l) = \Phi_{\psi,f,\Omega_{n+1}} \circ f_{\omega_n}\circ\Phi_{\psi,f,\Omega_n}^{-1}(\mathbf{z}_n(l)) =: F_{\omega_n,\dots,\omega_{n+l-1}}(\mathbf{z}_n(l)),
\end{equation} 
where the subscript $(\omega_n,\dots,\omega_{n+l-1})$ refers to the fact that one time step in the delay space depends on  the $l$ most recent entries of $\omega$. 
By \cref{thm: stark}, the ground truth system and the reconstructed dynamics in the delay space 
are bundle diffeomorphic. Here, the term \textit{bundle diffeomorphism} refers to the fact that the smooth coordinate transformation $\Phi$ in \cref{eq: delay dynamics} is dependent on $\omega$.

By learning closed form expressions for each of the generators of the delay embedded IFS, we can use the dynamics in \cref{eq: delay dynamics} to model the ground truth IFS.
However, as the original system is non-autonomous, the symbolic dynamics of the delay embedded system also have a delay embedded structure.
Therefore, model representations of the delay embedded system have an increased complexity that is not reflected in the original dynamics.
This leads to the following question. 
From the delay embedded system, can we find a parsimonious representation in which the ground truth IFS and the inferred IFS have the same number of generators?

To this end, we first aim to discover the associated sample(s) $\omega$ that generated the observed time series. 
We begin by identifying each delay vector with its corresponding delay map.
We assume the following. 
\begin{hypothesis}[Generic IFS]\label{hyp: embedding}\hfill
    \begin{enumerate}
        \item The generators of the IFS are diffeomorphisms of $M$, 
        \item the observable $\psi:M\to \mathbb{R}$ is continuous,
        \item there exist integers $l,q\ge 1$ finite, and $\epsilon>0$ such that the delay maps $\Phi_{\psi,f,\underline{i}}$ are embeddings for every $\underline{i}\in\Sigma_P^{l-1}$, and the pairwise distance between delay maps is at least $\epsilon$ in the Whitney $C^q$ topology for all pairs excluding the diagonal.
    \end{enumerate}
\end{hypothesis}

Since each delay map is an embedding, the images of $M$ under different delay map correspond to different $r$-dimensional sub-manifolds in the delay space \cite{IFSchannels}, and so the delay vectors $\mathbf{z}_n(l)$ belong to the following collection of manifolds:
\begin{equation*} \bigcup_{\underline{i}\in\Sigma_P^{l-1}} \Phi_{\psi,f,\underline{i}\hskip0.1em}(M), \quad \forall n\ge0.
\end{equation*}
Therefore, we propose to use multi-manifold learning to cluster together delay vectors that belong to the same delay map.
While delay vectors are assigned to different manifolds, we still need resolve their associated $(l-1)$-tuples.
This problem is the focus of \cref{section: TDEMCS}.

Naturally, our strategy is also contingent upon finding a suitable delay $l$.
There exist various techniques for determining the optimal delay size in the deterministic setting. 
However, these methods do not readily extend to IFSs.
We want to find the smallest $l>0$ such that each of the different delay maps are an embedding. 
However, popular methods for determining the minimum embedding dimension such as False Nearest Neighbours \cite{kennel1992determining}, or adaptations thereof \cite{cao1997practical}, are designed to check if a single delay map is an embedding, and so, can only be used when the base dynamics are known a priori. 
Therefore, we need a new methodology to check if multiple embeddings are present or not.
For the numerical examples presented in \cref{section: numerics}, we use an adhoc, brute force approach.
We search for the smallest delay which transforms the partial observations to a multi-manifold dataset by gradually increasing $l$. 
This works well for simple systems but for high dimensional and strongly nonlinear systems it may be nontrivial and remains an open problem beyond the aims of this paper.


\section{Time delay embedded Markov chains}\label{section: TDEMCS}
Following on from the previous section, our goal in this section is to discover the base dynamics $\omega$ that generated the observed time series.
In \cref{subsection:TDEMC-unravel}, we describe an algorithm that assigns to each delay map its corresponding $(l-1)$-tuple and in \cref{subsection: Algorithm proof} prove that this algorithm gives the correct output.
We use an abstract approach involving \textit{time delay embedded Markov chains} (TDEMCs). 

\begin{definition}[TDEMC]
    Let $(X_0, X_1,\dots)$ be a time homogeneous, irreducible Markov chain on $k$ symbols with transition matrix $P$.
    We define a time delay embedded Markov chain of delay $m>1$ as 
    \begin{equation}\label{eq:TDEMC} Y_n = X_nX_{n+1}\cdots X_{n+m-1}.\end{equation}
\end{definition}

Suppose we label the different delay maps $\Phi_{\psi,f,\underline{i}}$ in \cref{eq: delay embedding map} as $\textup{I},\textup{II},\dots,|\Sigma^{m}_P|$, where for notational convenience, we set $m:=l-1$.
Then each delay vector $\mathbf{z}_n(l)$ is assigned a label $Z_n\in\{\textup{I},\textup{II},\dots,\Sigma_P^{m}|\}$.
We let $\phi(i)$ denote the associated $m$-tuple of the $i^{\text{th}}$ delay map for $ i =\textup{I},\textup{II},\dots,|\Sigma^m_P|,$ and so $\phi(Z_n)=(\omega_n,\dots,\omega_{n+l-2})$. 
By learning $\phi(Z_n)$ we recover the original sequence of maps $\omega$ which generated the observation data, which we then to use to estimate the base dynamics' transition matrix $P$.

Using the sequence $Z_n$, we can estimate transition probabilities between different labels and store them in a transition graph.
The transition graph forms the basis of our method to recover $\omega$, outlined in \cref{fig: MC schematic} using a simple example.
From the topological structure of $G_Z$, we infer the topological structure of $G_X$.
Finally, we need to verify that the edge weights of $G_Z$ are compatible and consistent with possible edge weights on $G_X$, see for instance \cref{fig: MC schematic}.

\begin{figure}
    \centering
    \includegraphics[width=\textwidth, trim = 1cm 22.5cm 1cm 0cm]{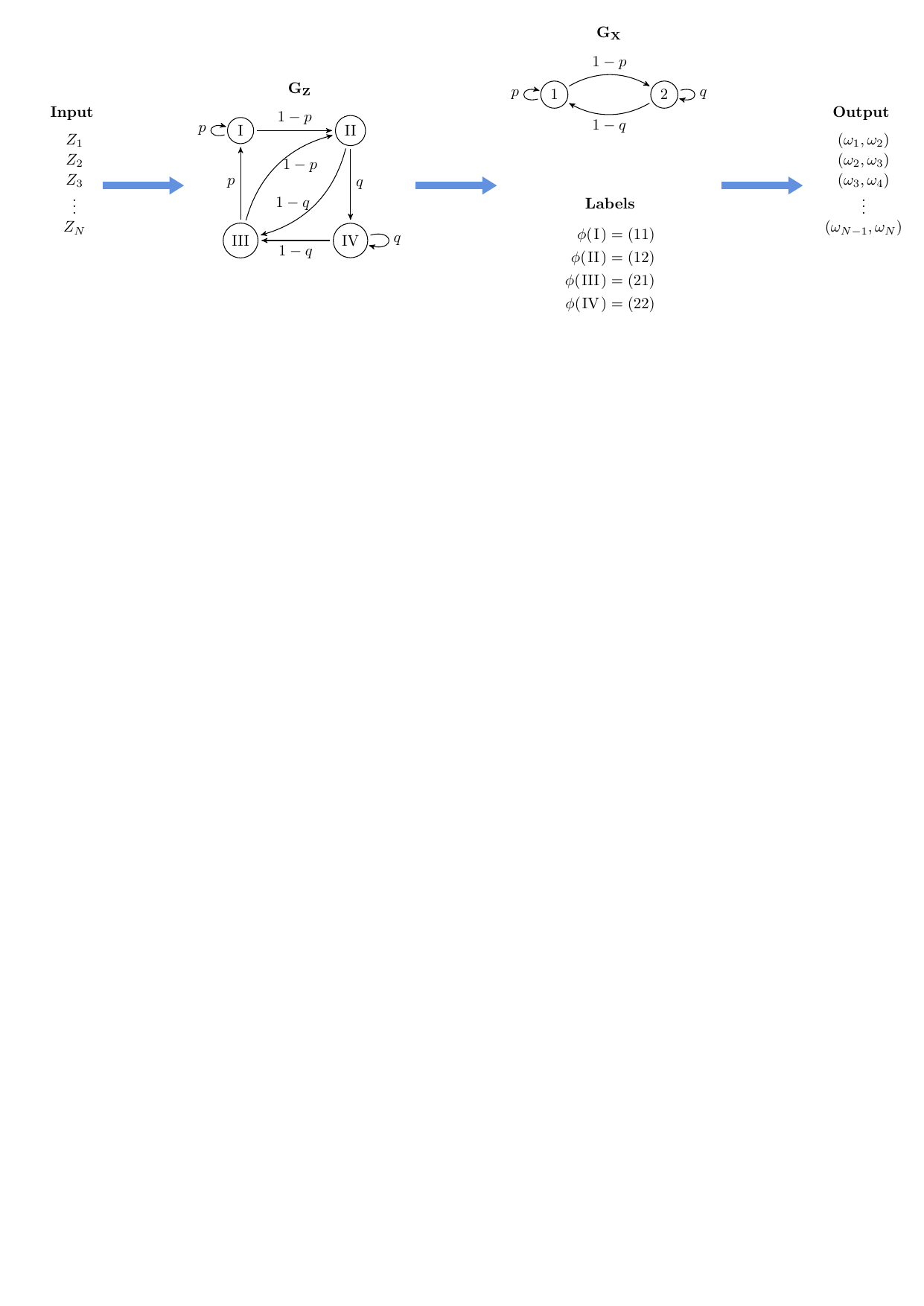}
    \caption{Summary of our procedure for recovering a time delay embedded Markov chain. 
    Using the input sequence $Z_n$ of delay maps, we estimate the transition graph $G_Z$ of the underlying TDEMC.
    After inferring $G_Z$, we apply \cref{alg: unembed MC} to discover the transition graph $G_X$ of the original MC that drives the ground truth IFS, along with the associated $(l-1)$-tuples of each delay map.
    Using $\phi$ we obtain the sequence of maps $\omega$ which generated the observation data.}
    \label{fig: MC schematic}
\end{figure}

\subsection{Procedure to unravel a TDEMC}\label{subsection:TDEMC-unravel}
We use the following set up. Let $(X_0, X_1,\dots)$ be a time homogeneous, irreducible Markov chain on $k$ symbols with transition matrix $P$, and $Y_n$ be the corresponding TDEMC of delay size $m$.
Additionally, let $\phi: \{1,\dots,|\Sigma_P^m|\} \to \Sigma_P^m$ be a bijection.
We assume we are given information on the Markov chain $Z_n$ satisfying $\phi(Z_n) = Y_n$, for some TDEMC $Y_n$ whose delay size $m$ is known, but $\phi$, $X_n$ and $Y_n$ themselves are not known.
Our goal is to recover $X_n$ from the observed MC $Z_n$.

Let $G_X$ refer to the transition graph of $X_n$, and $G_Z$ refer to the transition graph of $Z_n$. 
Our approach for recovering the original Markov chain $X_n$ requires knowledge on the \textit{elementary circuits} of $G_Z$, which we define below.

\begin{definition}[Closed walk]
    A walk on a graph $G$ refers to a sequence of nodes $\{v_1,\dots,v_n\}$, $n\ge2$, such that for all $i=1,\dots,n-1$, there exists a directed edge in $G$ from $v_i$ to $v_{i+1}$.
    A walk is closed if $v_n = v_1$.
\end{definition}

\begin{definition}[Elementary circuit]
    A circuit is a closed walk, in which each \underline{edge} is traversed only once. 
    An elementary circuit is a closed walk, in which each \underline{node} is visited only once (except the starting node). 
\end{definition}
The set of all elementary circuits $\Xi$ of a graph can be obtained using Johnson's circuit finding algorithm \cite{johnson1975finding}.
Let $\Xi_X$ and $\Xi_Z$ refer to the sets of elementary circuits of $G_X$ and $G_Z$ respectively.
Going forwards, we shall omit writing the final vertex in a closed walk, since the final node in the walk is immediately determined by its initial node.

We use the standard notion of distance on directed graphs \cite{chartrand1997distance}.
\begin{definition}[Directed distance]
    Let $\mathcal{N}$ denote the set of nodes of a graph $G$ and $U,V\subset \mathcal{N}$. 
We define the directed distance from  $U$ to $V$ in a graph $G$, as
\begin{equation*}
d(U,V) = \underset{u\in U, v\in V}{\text{min}}d(u,v),
\end{equation*}
where $d(u,v)$ is the length of any shortest path starting at $u$ and ending at $v$. Note that in a directed graph $d(U,V)\not\equiv d(V,U)$.
\end{definition}

We also need the following properties on nodes $v\in G_Z$, which follow directly from the definition of $G_Z$.

\begin{proposition}
\label{obs: inferring labels}\hfill
    \begin{enumerate}
        \item \label{obs: period < m}
        Let $\xi\in\Xi_Z $ be an elementary circuit of length $l < m$. Let $\{v_{1},\dots,v_{l}\}$ denote the nodes of $\xi$.
        Let $\phi(v_i) = a_{i_1}\cdots a_{i_m}$.
        Then for all $v_i \in \xi$, for all $1 \le j \le m -l $,  $$a_{i_j} =  a_{i_{j + l}}.$$
        \item \label{obs: dist < m}
        Fix $u,v \in G_z$ and let $\phi(u) = a_1\cdots a_m$.
        \begin{enumerate}
            \item If $d(v,u) < m$ then $\phi(v) = (\cdots, a_1,\dots,a_{m-d(v,u)})$.
            \item If $d(u,v) <m$ then $\phi(v) = (a_{d(u,v)+1},\dots,a_m,\cdots)$.
        \end{enumerate}
    \end{enumerate}
\end{proposition}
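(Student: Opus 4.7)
The plan is to reduce everything to a single shift/overlap lemma that falls straight out of \cref{eq:TDEMC}. Since $Y_n = X_n X_{n+1}\cdots X_{n+m-1}$, consecutive values $Y_n$ and $Y_{n+1}$ share $m-1$ coordinates (shifted by one position), so an edge $u\to v$ in $G_Z$ exists precisely when the last $m-1$ entries of $\phi(u)$ coincide with the first $m-1$ entries of $\phi(v)$. Equivalently, if $\phi(u)=(a_1,\ldots,a_m)$ and $u\to v$, then $\phi(v)=(a_2,\ldots,a_m,b)$ for some symbol $b$ with $P_{a_m b}>0$. I would prove this one-step shift lemma first, since it carries the whole argument.

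For \cref{obs: dist < m}(b) I would fix a shortest $G_Z$-path $u = w_0 \to w_1 \to \cdots \to w_k = v$ with $k = d(u,v) < m$ and iterate the shift lemma $k$ times. By a straightforward induction, the first $m-k$ entries of $\phi(w_k)$ coincide with the last $m-k$ entries of $\phi(w_0)=\phi(u)$, i.e., with $(a_{k+1},\ldots,a_m)$. This is exactly the asserted form $\phi(v)=(a_{k+1},\ldots,a_m,\cdots)$. The argument for \cref{obs: dist < m}(a) is symmetric: along a shortest path $v = w_0 \to \cdots \to w_k = u$ with $k = d(v,u) < m$, iterating the shift lemma shows that the last $m-k$ entries of $\phi(v)$ equal the first $m-k$ entries of $\phi(u)$, namely $(a_1,\ldots,a_{m-k})$, giving $\phi(v)=(\cdots,a_1,\ldots,a_{m-k})$.

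For \cref{obs: period < m} I would apply the shift lemma along the elementary circuit $v_1\to v_2\to\cdots\to v_l\to v_1$. Traversing the circuit once produces a cumulative shift of $l$ positions in the label, so for each $1\le j\le m-l$ the $j$-th entry of the label at $v_1$ after the loop equals the $(j+l)$-th entry of its label before the loop. Since the node, and hence its label, is unchanged after one traversal, this forces $a_{1_j} = a_{1_{j+l}}$. Starting the traversal at any other $v_i$ (each vertex of an elementary circuit is the start of a closed simple walk of length $l$) yields the same identity at $v_i$.

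The only real obstacle is index bookkeeping: keeping the shift direction consistent with edge orientation along shortest paths and with the orientation of circuit traversal. No deeper difficulty arises, which matches the authors' remark that the statement follows directly from the definition of $G_Z$.
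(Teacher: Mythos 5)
Your proof is correct, and it is precisely the formalization of the paper's one-line justification: the paper offers no written proof, stating only that the proposition ``follows directly from the definition of $G_Z$'', and the one-step shift/overlap lemma you isolate (an edge $u\to v$ in $G_Z$ forces $\phi(v)$ to be the left shift of $\phi(u)$ with one new final symbol) is exactly that definition made explicit. Iterating it along shortest paths for part 2 and around a length-$l$ circuit for part 1 is the intended argument, so there is nothing to reconcile between your route and the paper's.
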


The procedure for recovering $X_n$ from $Z_n$ is as follows. 
We use \cref{alg: unembed MC} to determine $\phi$ and hence recover $Y_n$.  
From $Y_n$ we obtain the original Markov chain $X_n$ (up to a possible relabelling). 

\begin{algorithm}
    \caption{Unembedding a TDE Markov chain}\label{alg: unembed MC}
    \begin{algorithmic}
        \STATE \textbf{Inputs.}
        \STATE \hskip1em $1.$ Delay size $m\in\mathbb{N}$.
        \STATE \hskip1em $2.$ List of elementary circuits $\Xi = \left[\xi_1,\dots,\xi_S\right]$ in $G_z$ sorted by length, from shortest to
        \STATE \hskip2.1em longest.
        \STATE \hskip1em $3.$ Weighted adjacency matrix $Q$ of $G_z$.
        \STATE \textbf{Ouputs.}
        \STATE \hskip1em $1.$ A unique $m$-tuple for each node $v\in G_Z$.
        \STATE \hskip1em $2.$ The unembedded graph $G_X$.
        \STATE \hskip1em $3.$ Weighted adjacency matrix $P$ of $G_X$.
        \STATE \textbf{Initialise.}
        \STATE \hskip1em $1.$ A null graph $G_X$, i.e., a graph with 0 nodes and 0 edges.
        \STATE \hskip1em $2.$ $L = [\ ]$. A list storing a subset of nodes in $G_Z$ which belong to elementary cycles and 
        \STATE \hskip2.1em have an assigned tuple.

        \STATE \textbf{Steps.}\quad For $k=1,\dots,S$:
        \STATE \hskip1em $1$. Remove $\xi_k$ from $\Xi$. Let $\{v_{1},\dots,v_{n_k}\}$ denote the nodes of $\xi_k$ in order along the 
        \STATE \hskip2.2em circuit.
        \STATE \hskip1em $2.$ Assign a tuple to each state in $\xi_k$ using \cref{Alg: MC labels}.
        \STATE \hskip1em $3.$ Let $\beta(v_i)$ denote the last element of its associated tuple (of length $m$).
        \STATE \hskip2.1em $3.1$ If the closed walk $\{\beta(v_1),\dots,\beta(v_{n_k})\} \subseteq G_X$, do not update $L$.
        \STATE \hskip2.1em $3.2$ Else, add the elementary circuit $\{\beta(v_1),\dots,\beta(v_{n_k})\}$ to $G_X$. 
        \STATE \hskip3.7em For each $i=1,\dots,n_k$, set $P_{\beta(v_i),\beta(v_j)} = Q_{v_i,v_j}>0,$ where $j := (i+1)\text{mod} \ n_k$.
        \STATE \hskip3.7em Append to $L$ the set  $\{v_{i} : v_{i}\notin L \}$. 
    \end{algorithmic}
\end{algorithm}

\begin{theorem}\label{thm: 2}
  \cref{alg: unembed MC} outputs $G_X$, $\phi$.
\end{theorem}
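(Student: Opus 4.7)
The plan is to prove the theorem in three parts: (i) every node of $G_Z$ eventually receives a tuple assignment; (ii) the assigned tuple at every node agrees with $\phi$ up to the intrinsic freedom of relabelling the $k$ symbols of $X_n$; and (iii) the graph $G_X$ reconstructed from the last entries of the tuples, together with the inherited weights $P$, coincides with the true transition graph of $X_n$.

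For (i), since $X_n$ is irreducible, the TDEMC $Y_n$ and hence $Z_n$ are irreducible, so $G_Z$ is strongly connected. Every node of a strongly connected finite directed graph lies on some elementary circuit, hence appears in $\Xi$ and is processed by the outer loop of \cref{alg: unembed MC}.

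For (ii), I would induct on the iteration index $k$. For the base case $k=1$ (the shortest elementary circuit $\xi_1$): if $\mathrm{len}(\xi_1) < m$, \cref{obs: inferring labels}\cref{obs: period < m} forces the tuples along $\xi_1$ to be periodic with period $\mathrm{len}(\xi_1)$, so fixing the symbol alphabet of $X_n$ (the aforementioned relabelling freedom) and choosing one representative tuple determines the rest; \cref{Alg: MC labels} realises precisely this propagation. If $\mathrm{len}(\xi_1) \ge m$, we assign an arbitrary tuple to one node and slide around the circuit, using that a directed edge $u\to v$ in $G_Z$ forces $\phi(v)$ to be the left-shift of $\phi(u)$ with one new appended symbol. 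For the inductive step, a new circuit $\xi_k$ either shares a node $v$ with a previously processed circuit (in which case $\phi(v)$ is already fixed, and \cref{obs: inferring labels}\cref{obs: dist < m} propagates labels uniquely around $\xi_k$), or is disjoint from the currently labelled set, in which case we use a shortest directed path in $G_Z$ between the labelled set and $\xi_k$ and again apply \cref{obs: inferring labels}\cref{obs: dist < m} to propagate. Uniqueness of the assignment when $\xi_k$ intersects labelled nodes at more than one place reduces to the fact that two directed walks in $G_Z$ with the same starting tuple must end in the same tuple, which holds because $\phi$ is a bijection and the shift operation is deterministic.

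For (iii), by definition of a TDEMC, whenever $Q_{v,w} > 0$ with $\phi(v) = a_1 a_2 \cdots a_m$, we must have $\phi(w) = a_2 \cdots a_m b$ for some symbol $b$, and $Q_{v,w}$ equals the one-step transition probability $P_{a_m, b}$ of $X_n$. Since $\beta(v) = a_m$ and $\beta(w) = b$, Step 3.2 correctly registers the edge $\beta(v)\to\beta(w)$ with weight $Q_{v,w}$ in $G_X$. Well-definedness of $P$ (two distinct pairs $(v,w)$ yielding the same edge of $G_X$ must carry the same weight) and the Step 3.1 check both follow from the TDEMC transition probability depending only on the final two base symbols. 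Combined with (i), (ii) and the fact that every edge $a\to b$ of the true $G_X$ is realised by at least one $(v,w)$ with $\phi(v)$ ending in $a$ and $\phi(w)$ ending in $b$ (using irreducibility of $X_n$ and the ability to prepend any admissible $(m-1)$-prefix), the reconstructed graph equals the true $G_X$.

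The main obstacle is the inductive step of (ii): showing that when a newly processed circuit $\xi_k$ meets previously labelled circuits at several nodes simultaneously, \cref{Alg: MC labels} cannot produce contradictory assignments. Handling this cleanly requires the deterministic-shift observation above, and a careful case analysis of how $\xi_k$ can share substructure with earlier circuits of length $<m$ versus $\ge m$.
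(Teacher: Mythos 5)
Your overall strategy is the same as the paper's: process the elementary circuits of $G_Z$ shortest-first, use part 1 of \cref{obs: inferring labels} to label short circuits and part 2 to propagate labels to nearby nodes, and then read off $G_X$ and $P$ from the final tuple entries. Parts (i) and (iii) of your argument, and the consistency argument for circuits that meet the already-labelled set, are sound and in fact spell out more detail than the paper's own (very terse) proof does.

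There is, however, one concrete gap in your inductive step (ii). You claim that when a new circuit $\xi_k$ is disjoint from the labelled set you can ``use a shortest directed path in $G_Z$ between the labelled set and $\xi_k$ and again apply part 2 of \cref{obs: inferring labels} to propagate.'' That proposition only constrains tuples of nodes at directed distance \emph{strictly less than} $m$; a single edge $u\to v$ determines only the first $m-1$ entries of $\phi(v)$ from $\phi(u)$, so propagation along a path of length $\ge m$ determines nothing. This is precisely the case in which \cref{Alg: MC labels} introduces fresh symbols ($N_V+1$), and its correctness is not a propagation statement at all: you must show that a circuit lying at distance $\ge m$ (in both directions) from everything already labelled corresponds to a closed walk in $G_X$ sharing \emph{no} states with the walks already identified, so that inventing new symbols neither duplicates an existing state of $X_n$ nor merges two distinct ones. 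This is exactly the role of \cref{prop: dist >= m} (together with its converse, which is what the algorithm actually uses), and of \cref{corollary: composition of circuits}, which guarantees that non-elementary combinations are only ever encountered after their shorter constituents and hence always fall into the ``close to the labelled set'' case. Without this ingredient your induction does not rule out the algorithm producing a $G_X$ with too many or too few nodes, so you should add it to close the argument.
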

We defer the proof of \cref{thm: 2} to \cref{subsection: Algorithm proof}.

\begin{remark}
    It suffices to consider only a minimum circuit basis (see \cite{gleiss2003circuit} for details) of $G_Z$ as input to \cref{alg: unembed MC} rather than all elementary circuits of $G_Z$.
This may reduce the size of the computation.
\end{remark}

\begin{algorithm}
    \caption{Assigning $m$-tuples to an elementary circuit}\label{Alg: MC labels}
    \begin{algorithmic}
        \STATE \textbf{Inputs.}
        \STATE \hskip1em $1.$ Delay size $m\in\mathbb{N}$.
        \STATE \hskip1em $2.$ Elementary circuit $\xi_k = \{v_{k_1},\dots,v_{k_{n_k}}\}\subset G_Z$, with the nodes of $\xi_k$ listed in order
        \STATE \hskip2.1em along the circuit.
        \STATE \hskip1em $3.$ A list $L$ containing a subset of nodes in $G_Z$ which belong to elementary cycles and
        \STATE \hskip2.1em have an assigned tuple.
        \STATE \hskip1em $4.$ Local variable $N_V$ which counts the number of nodes in $G_X$.
        \STATE \textbf{Ouputs.}
        \STATE \hskip1em $1.$ A unique $m$-tuple for each of the nodes $v_{k_i}\in \xi_k$.
        \STATE \textbf{Steps.}
        \STATE \hskip1em $1.$ If $L\neq \emptyset$, for each $v_{k_i}\in\xi_k$ with $d(v_{k_i},L)<m$ or $d(L, v_{k_i})<m$, update its tuple
        \STATE \hskip2.1em according to \hyperref[obs: dist < m]{\cref*{obs: inferring labels}.\ref*{obs: dist < m}}. 
        \STATE \hskip1em $2$ While there are nodes $v_{k_i}$ with incomplete tuples:
        \STATE \hskip2.1em $2.1$ Take any node $v_{k_j}$with an incomplete tuple. 
        \STATE \hskip3.7em Set the first missing element in its tuple to $N_V+1$.
        \STATE \hskip2.1em $2.2$ Update the tuple of $v_{k_j}$ according to \hyperref[obs: dist < m]{\cref*{obs: inferring labels}.\ref*{obs: period < m}}.
        \STATE \hskip2.1em $2.3$ Update the tuples of the other nodes in $\xi_k$ using \hyperref[obs: dist < m]{\cref*{obs: inferring labels}.\ref*{obs: dist < m}}.
        \STATE \hskip2.1em $2.4$ Set $N_V = N_V +1$.
    
    \end{algorithmic}
\end{algorithm}

We demonstrate \cref{alg: unembed MC} on the TDE Markov chain whose transition graph $G_Z$ is shown in \cref{fig: MC schematic} and has delay size $m=2$.
The elementary circuits of $G_Z$, sorted by length, are 
\begin{equation*}\Xi = \left\{\{\textup{1}\},\{\textup{IV}\}, \{\textup{II,III}\}, \{\textup{I,II,III}\}, \{\textup{II,IV,III}\}, \{\textup{I,II,IV,III}\}\right\}.
\end{equation*}
Applying \cref{alg: unembed MC}, we take the first circuit in $\Xi$. Since this circuit has length 1, we assign to state $1$ the tuple $(11)$ and append the state to $L$.
Next, we take the circuit $\{\textup{IV}\}$. As $d(\{\textup{IV}\},\{\textup{I}\})=d(\{\textup{I}\},\{\textup{IV}\})\not< m$ and $|\{\textup{IV}\}|=1$, we assign to state $\textup{IV}$ the tuple $(22)$ and append this state also to $L$.
The next smallest circuit in $\Xi$ is $\{\textup{II,III}\}$. 
Using existing knowledge on assigned tuples to states in $L$ and \hyperref[obs: dist < m]{\cref*{obs: inferring labels}.\ref*{obs: dist < m}}, we infer that state $\textup{II}$ is associated to the tuple $(12)$ and state $\textup{III}$ is associated to $(21)$.
Having assigned tuples to each of the states in $G_Z$, by taking into account the edge weights of the embedded graph, we obtain the transition graph $G_X$ shown in \cref{fig: MC schematic}.

\subsection{Proof of \cref{thm: 2}}\label{subsection: Algorithm proof}
We begin by showing that the TDE Markov chain is irreducible if and only if the original Markov chain is irreducible.

\begin{proposition}\label{prop: irreducibility}
    $X_n$ is irreducible iff $Z_n$ is irreducible.
\end{proposition}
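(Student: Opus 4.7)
The plan is to first reduce to a statement purely about $X_n$ and $Y_n$. Because $\phi$ is a bijection and $\phi(Z_n) = Y_n$, the Markov chains $Z_n$ and $Y_n$ are related by a relabelling of states, so they have isomorphic transition graphs and $Z_n$ is irreducible if and only if $Y_n$ is. It therefore suffices to prove that $X_n$ is irreducible if and only if $Y_n$ is. The state space of $Y_n$ consists precisely of admissible $m$-tuples, i.e.\ tuples $(a_1,\dots,a_m)$ with $P_{a_i a_{i+1}}>0$ for all $i<m$, and the transitions of $Y_n$ take $(a_1,\dots,a_m)$ to $(a_2,\dots,a_m,c)$ whenever $P_{a_m c}>0$. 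Thus a path in the transition graph $G_Y$ corresponds to an admissible $X_n$-sequence viewed through a sliding window of length $m$.

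For the forward direction, I would assume $X_n$ is irreducible and fix admissible $m$-tuples $\underline{a}=(a_1,\dots,a_m)$ and $\underline{b}=(b_1,\dots,b_m)$. Using irreducibility of $X_n$ I would find an admissible $X_n$-path $a_m=c_0,c_1,\dots,c_r=b_1$; concatenating with the admissible tail $b_2,\dots,b_m$ (admissible by virtue of $\underline{b}$ being a valid state of $Y_n$) produces an admissible word whose sliding window of length $m$ reads off the $Y_n$-states $\underline{a},\dots,\underline{b}$, giving the desired path in $G_Y$.

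For the converse, I would assume $Y_n$ is irreducible and fix two states $a,b$ appearing in $X_n$. Since $X_n$ has a state space on which the chain is defined, I can extend any occurrence of $a$ forward by $m-1$ admissible steps to produce a tuple $\underline{a}$ beginning with $a$, and similarly a tuple $\underline{b}$ ending with $b$. Irreducibility of $Y_n$ provides a path in $G_Y$ from $\underline{a}$ to $\underline{b}$, and unfolding the sliding-window correspondence reads off an admissible $X_n$-word whose first symbol is $a$ and whose last symbol is $b$, so $X_n$ is irreducible.

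The argument is essentially bookkeeping, and the only real subtlety is ensuring that the state spaces are handled consistently: namely, that for every state $a$ of $X_n$ there exists at least one admissible $m$-tuple starting with $a$ (and symmetrically, one ending with $b$), so that the converse direction has a well-defined tuple to work with. This is immediate once one interprets the state spaces as those symbols/tuples that actually appear with positive probability, so I expect no substantive difficulty beyond a careful statement of the sliding-window correspondence.
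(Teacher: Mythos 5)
Your argument is correct and rests on the same idea as the paper's proof: the sliding-window correspondence between admissible words of $X_n$ and paths in the transition graph of $Y_n$ (equivalently $Z_n$, after the relabelling by $\phi$). The only cosmetic difference is that you prove the reverse implication directly while the paper argues by contrapositive (exhibiting a pair of non-communicating tuples from a pair of non-communicating states), and both treatments leave the same minor bookkeeping point about the existence of an admissible tuple ending in a given state at roughly the same level of informality.
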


\begin{proof}
    ($\implies$)
    Suppose $X_n$ is irreducible.
    Fix nodes $u,v\in G_Z$. 
    WLOG write $\phi(u) = a_1\cdots a_m$ and $\phi(v) = b_1\cdots b_m$. 
    We first show $v$ is accessible from $u$. 
    Since $X_n$ is irreducible, we know $b_1$ is accessible from $a_m$ in $X$. 
    Hence, there exists $n\in\mathbb{N}$ such that
    \begin{equation*}
    \mathbb{P}(X_{n} = b_1|X_m = a_m, \dots, X_1 = a_1)>0.
    \end{equation*}
    Since $v\in G_Z$, we have $\mathbb{P}(X_m = b_m, X_{m-1} = b_{m-1},\dots X_2 = b_2|X_1=b_1)>0$.
    Hence, by homogeneity and the Markov property, $\exists n \in \mathbb{N}$ such that 
    $\mathbb{P}(Z_n = b|Z_1= a) >0$.
    Analogously, we can show  that $a$ is accessible from $b$ and hence $Z_n$ is irreducible.

    ($\impliedby$)
    Suppose $X_n$ is not irreducible.
    By definition, there exist nodes $a_m,b_m \in G_X$ such that 
    $$\mathbb{P}(X_n = b_m|X_1=a_m) = 0 \quad \forall n >0.$$
    Fix nodes $a_1,\dots, a_{m-1}, b_1,\dots, b_{m-1} \in G_X$ such that $\mathbb{P}(X_2 = a_2,\dots, X_m=a_m|X_1 = a_1)>0$
    and $\mathbb{P}(X_2= b_2,\dots,X_m = b_m|X_1=b_1)>0$. 
    Then there exists nodes $\phi^{-1}(a_1a_2\cdots a_m),$ $\phi^{-1}(b_1b_2\cdots b_m) \in G_Z$, however, 
    $$\mathbb{P}(Z_n = \phi^{-1}(b_1b_2\cdots b_m) |Z_1 = \phi^{-1}(a_1a_2\cdots a_m) ) = 0 \quad \forall n >1.$$
    Therefore, $Z_n$ is not irreducible.
    This completes the proof.
\end{proof}

As $X_n$ is irreducible by assumption, then by \cref{prop: irreducibility}, $Z_n$ is also irreducible. 
By definition of irreducibility, this implies that the graphs $G_X$ and $G_Z$ are both strongly connected. 
As $G_X$ is strongly connected, each edge in $G_X$ belongs to an elementary circuit \cite{gleiss2003circuit}. 
Therefore, to recover $G_X$, it is sufficient to identify each of the elementary circuits in $G_X$.

The next two propositions describe the relationship between elementary circuits in the two topological graphs $G_X$ and $G_Z$.

\begin{proposition}
    The number of elementary circuits in $G_Z$ is greater than or equal to the number of elementary circuits in $G_X$.
\end{proposition}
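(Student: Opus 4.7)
The plan is to construct an injection $\iota:\Xi_X\hookrightarrow\Xi_Z$, which immediately yields $|\Xi_Z|\ge|\Xi_X|$. Given an elementary circuit $\xi=(a_1,a_2,\ldots,a_n)$ in $G_X$ (indices read modulo $n$, with the usual understanding that $a_{n+1}=a_1$), the natural candidate is to slide a window of length $m$ along $\xi$: define the $m$-tuples
\begin{equation*}
u_i := (a_i,a_{i+1},\ldots,a_{i+m-1})\quad\text{for }i=1,\ldots,n,
\end{equation*}
and set $\iota(\xi):=\bigl(\phi^{-1}(u_1),\phi^{-1}(u_2),\ldots,\phi^{-1}(u_n)\bigr)$, viewed as a closed walk in $G_Z$.

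The first step is to verify that $\iota(\xi)$ is a well-defined closed walk in $G_Z$. For this I would observe that each consecutive pair $(a_j,a_{j+1})$ appearing inside $u_i$ is an edge of $G_X$ (since $\xi$ is a walk in $G_X$), so the tuple $u_i$ has positive probability under the law of $X_n$ and hence corresponds to a genuine state of the TDEMC $Y_n$, i.e.\ $u_i\in\Sigma_P^m$. Moreover, by construction $u_{i+1}$ is obtained from $u_i$ by dropping the first coordinate and appending $a_{i+m}$, which is precisely the shift defining edges of $G_Z$; thus $\phi^{-1}(u_i)\to\phi^{-1}(u_{i+1})$ is an edge of $G_Z$ for every $i$.

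Next I would show that $\iota(\xi)\in\Xi_Z$, i.e.\ that the nodes $\phi^{-1}(u_1),\ldots,\phi^{-1}(u_n)$ are pairwise distinct. Because $\phi$ is a bijection it suffices to check the $u_i$ are distinct, and this is immediate since the first coordinate of $u_i$ is $a_i$ and the $a_i$ are pairwise distinct along the elementary circuit $\xi$. Finally, injectivity of $\iota$ is essentially free: from $\iota(\xi)$ one recovers $\xi$ by reading off the first coordinate of $\phi$ at each node in cyclic order. Regarding $\Xi_X$ and $\Xi_Z$ as sets of elementary circuits up to cyclic rotation, the construction respects this quotient, giving the desired inequality.

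I do not anticipate a genuine obstacle; the only point deserving mild care is the edge case $n\le m$, where the sliding window wraps around $\xi$ one or more times. The argument still goes through verbatim with indices taken modulo $n$, since every transition required to certify $u_i\in\Sigma_P^m$ is an edge of $\xi\subseteq G_X$.
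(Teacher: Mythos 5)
Your construction is exactly the one in the paper: given an elementary circuit $(a_1,\ldots,a_n)$ in $G_X$, the paper defines $\phi(v_i)=a_i a_{(i+1)\bmod n}\cdots a_{(i+m-1)\bmod n}$ and asserts that $\{v_1,\ldots,v_n\}$ is an elementary circuit of $G_Z$ "by construction". You simply make explicit the details the paper leaves implicit (admissibility of each window, distinctness of the $u_i$ via their first coordinates, and injectivity of the map), so the argument is correct and essentially identical.
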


\begin{proof}
    Let $\{a_1,a_2,\dots,a_p\}$ be an elementary circuit of size $p$ in $G_X$. 
    Note that necessarily $p\le k$, the number of states in the original Markov chain.
    Let $v_i$ satisfy 
    \begin{equation*}
    \phi(v_i) := a_i a_{(i+1) \hskip0.1em \text{mod} \hskip0.1em p }\hskip0.1em a_{(i+2) \hskip0.1em \text{mod} \hskip0.1em p }\cdots a_{(i+m-1) \hskip0.1em \text{mod} \hskip0.1em p },
    \end{equation*}
    for each $i=1,\dots,p$. 
    Then, by construction, $\{v_1,\dots,v_p\}$ is a elementary circuit of $G_Z$.
\end{proof}

\begin{proposition}\label{prop: induced elementary circuits}
    Elementary circuits in $G_Z$ are formed by delay embedding closed walks in $G_X$ which are not necessarily elementary circuits. 
\end{proposition}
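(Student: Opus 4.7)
The plan is to unpack the combinatorial structure of $G_Z$ in terms of $G_X$, and then show that projecting an elementary circuit of $G_Z$ onto its first coordinates yields a closed walk in $G_X$, which need not visit each node only once.

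First I would observe that by the definition of a TDEMC, an edge $u\to v$ in $G_Z$ exists precisely when $\phi(u)=a_1a_2\cdots a_m$ and $\phi(v)=a_2\cdots a_m b$ for some $b\in\{1,\dots,k\}$ with $P_{a_m,b}>0$; in particular the tuple of $v$ is obtained from that of $u$ by dropping the first symbol and appending one new symbol that is $G_X$-reachable from $a_m$. In particular, the tuple $\phi(u)=a_1\cdots a_m$ itself encodes a walk $a_1\to a_2\to\cdots\to a_m$ in $G_X$.

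Now let $\{v_1,\dots,v_p\}$ be an elementary circuit in $G_Z$, and write $\phi(v_i)=a_{i,1}a_{i,2}\cdots a_{i,m}$. Using the edge characterisation above inductively along the circuit, I would show that $a_{i+1,j}=a_{i,j+1}$ for $1\le j\le m-1$, so that the whole circuit is determined by the sequence of first symbols $a_{1,1},a_{2,1},\dots,a_{p,1}$. Closedness of the circuit ($v_{p+1}=v_1$) forces $a_{p+1,1}=a_{1,1}$. Consecutive first symbols $a_{i,1}, a_{i+1,1}=a_{i,2}$ are adjacent entries of $\phi(v_i)$, which by the previous paragraph gives an edge in $G_X$. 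Hence $a_{1,1}\to a_{2,1}\to\cdots\to a_{p,1}\to a_{1,1}$ is a closed walk in $G_X$, and applying the TDEMC construction to this closed walk recovers exactly the circuit $\{v_1,\dots,v_p\}$.

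Finally, I would show by example that the recovered closed walk need not be an elementary circuit of $G_X$. For instance, with $m=2$ and a $G_X$ containing edges $a\to b,\,b\to a,\,a\to c,\,c\to a$, the closed walk $a\to b\to a\to c\to a$ in $G_X$ visits $a$ three times, yet its delay embedding produces the four distinct $G_Z$-nodes $(a,b),(b,a),(a,c),(c,a)$, forming an elementary circuit in $G_Z$. The main subtle point is therefore not the construction itself but the fact that distinctness of the $m$-tuples $\phi(v_i)$ is a strictly weaker condition than distinctness of their first coordinates, which is exactly what allows a non-elementary closed walk in $G_X$ to lift to an elementary circuit in $G_Z$.
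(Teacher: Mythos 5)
Your proposal is correct and follows essentially the same route as the paper: project each node's $m$-tuple onto its first symbol, use the shift/overlap structure of the TDEMC edges to see that these first symbols trace a closed walk in $G_X$, and observe that this walk need not be elementary. The paper is terser (it simply asserts the projection is a closed walk and cites a lemma that a non-elementary closed walk decomposes into elementary circuits), whereas you additionally spell out the edge characterisation and give a concrete example, but the underlying argument is the same.
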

\begin{proof}
    Let $\xi = \{v_1,\dots,v_s\}$ be an elementary circuit in $G_Z$ and 
    let $\phi(v_i):= a_{i_1}\cdots a_{i_m}$ for each $i=1,\dots,s$. 
    Then it follows that $\{a_{1_1},\dots, a_{s_1}\}$ is a closed walk on $G_X$.
    If the closed walk is not an elementary circuit in $G_X$ it must be a combination of two or more elementary circuits in $G_X$ \cite[Lemma 2]{gleiss2003circuit}.
\end{proof}

As an example, consider again the TDE Markov chain depicted in \cref{fig: MC schematic}.
$G_X$ admits three elementary circuits: $\{1\},$ $\{2\},$ $\{1,2\}$, while $G_Z$ admits six elementary circuits: $$\{\textup{1}\},\{\textup{IV}\}, \{\textup{II,III}\}, \{\textup{I,II,III}\}, \{\textup{II,IV,III}\}, \{\textup{I,II,IV,III}\},$$ where
$$ \phi(11) = \textup{I},\ \phi(12)=\textup{II},\ \phi(21)=\textup{III}, \ \phi(22) = \textup{IV}.$$
The extra elementary circuits of $G_Z$ correspond to the (delay embedded) closed walks $\{1,1,2\},$ $\{1,2,2\},$ $\{1,1,2,2\}$ on $G_X$.

Following \cref{prop: induced elementary circuits}, we shall say that an elementary circuit $\xi_Z$ in $G_Z$ corresponds to a closed walk $\xi_X$ in $G_X$ if embedding the path $\{\xi_X,\xi_X,\dots\}$ on $G_X$ forms the path $\{\xi_Z,\xi_Z,\dots\}$ on $G_Z$, and vice versa.  
It remains to find the elementary circuits in $G_Z$ that also correspond to elementary circuits in $G_X$.
Some of the shared elementary circuits of the two graphs can be easily identified, these circuits are characterised in \cref{obs: min length}.

\begin{observation}\label{obs: min length}
    Let $l$ denote the length of the smallest circuit(s) in $G_X$. If a circuit $\xi \in G_Z$ has length $l$, then $\xi\in\Xi_Z$.
\end{observation}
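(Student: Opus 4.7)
The plan is to argue by contradiction: suppose $\xi \in G_Z$ is a circuit of length $l$ that is \emph{not} elementary. Then some vertex of $\xi$ is visited twice, so the portion of $\xi$ strictly between the two occurrences forms a closed sub-walk of length strictly less than $l$. Since $\xi$ has no repeated edges (being a circuit), neither does this sub-walk, so the sub-walk is itself a circuit. Iterating this trimming whenever a vertex still repeats must terminate in an elementary circuit $\xi' \in \Xi_Z$ of length strictly less than $l$.

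Next I would invoke \cref{prop: induced elementary circuits}. Its proof shows that the elementary circuit $\xi' = \{v_1,\dots,v_s\}$ in $G_Z$ induces, by reading off the first coordinate of $\phi(v_i)$ at each node $v_i$, a closed walk of equal length $s$ on $G_X$. Applied to our $\xi'$, this produces a closed walk in $G_X$ of some length $s < l$.

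To close the argument I would use the standard fact that every closed walk in a directed graph contains an elementary circuit of length at most that of the walk: cut out the sub-walk between any pair of repeated vertices and recurse until all visited vertices are distinct. Applied to the closed walk in $G_X$ just constructed, this yields an elementary circuit of $G_X$ of length at most $s < l$, contradicting the defining minimality of $l$. Note that minimality over all circuits of $G_X$ is equivalent to minimality over elementary circuits, since any circuit similarly contains an elementary sub-circuit of at most equal length, so the starting assumption on $l$ and the conclusion are compatible.

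I do not anticipate any real obstacle; the argument is short and mechanical. The main thing to get right is to keep the two notions straight: a \emph{circuit} forbids repeated edges while an \emph{elementary circuit} forbids repeated nodes, and trimming a sub-walk preserves the former but is exactly what is needed to reach the latter. The length-preservation in \cref{prop: induced elementary circuits} is also essential and is immediate from its construction, since one node of the $G_Z$-circuit is consumed per step of the induced $G_X$-walk.
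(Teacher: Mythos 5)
Your argument is internally correct and does prove the statement as literally printed: trimming a non-elementary circuit of $G_Z$ down to an elementary sub-circuit of length strictly less than $l$, projecting it via \cref{prop: induced elementary circuits} to a closed walk of the same length in $G_X$, and extracting from that an elementary circuit of $G_X$ of length less than $l$ contradicts the minimality of $l$. Each of the three reductions you use (circuit $\to$ shorter elementary circuit, elementary circuit of $G_Z$ $\to$ equal-length closed walk of $G_X$, closed walk $\to$ elementary sub-circuit) is sound.

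However, this is not what the paper's own proof establishes, and the discrepancy matters. The paper argues: if $\xi$ does \emph{not correspond to an elementary circuit of $G_X$}, then by \cref{prop: induced elementary circuits} its projection to $G_X$ decomposes into two or more elementary circuits of $G_X$, each of length at least $l$, so $|\xi|\ge 2l>l$. In other words, the intended conclusion is that a length-$l$ circuit of $G_Z$ lies in $\Pi$ (in the notation introduced immediately after the observation), i.e.\ its projected closed walk is elementary \emph{in $G_X$}; the printed ``$\xi\in\Xi_Z$'' appears to be a misprint. This stronger conclusion is the one that \cref{corollary: composition of circuits} and the correctness of \cref{alg: unembed MC} actually consume: knowing only that $\xi$ is elementary in $G_Z$ is not enough, precisely because \cref{prop: induced elementary circuits} shows that elementary circuits of $G_Z$ can project to non-elementary closed walks of $G_X$. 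Your proof does not address this. The fix is cheap: your projection step already produces a closed walk of length exactly $l$ in $G_X$, and if that walk were not an elementary circuit it would decompose into at least two elementary circuits of $G_X$, each of length at least $l$, giving total length at least $2l$ --- a contradiction. Appending that one sentence to your argument recovers the paper's intended conclusion as well as the literal one.
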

\begin{proof}
    This follows directly from \cref{prop: induced elementary circuits}. 
    If $\xi$ does not correspond to an elementary circuit in $G_X$ it must be a combination of two or more elementary circuit in $G_X$ and therefore have length greater than $l$.     
\end{proof}

We can naturally extend \cref{obs: min length} as follows.
Let $\Pi$ denote the set of elementary circuits in $G_Z$ which also correspond to elementary circuits in $G_X$.
Denote by $\Pi_l$ the set of elementary circuits in $\Pi$ of length $\le l$.

\begin{corollary}\label{corollary: composition of circuits}
    Let $\xi^* \in \Xi_Z$ with $|\xi^*| = \text{min}_{|\xi|}\{ \xi \in \Xi_Z : |\xi| > l\}$. 
    Then either $\xi^* \in \Pi$ or $\xi^*$ is a combination of two or more elementary circuits in $\Pi_l$.
\end{corollary}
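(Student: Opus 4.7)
The plan is to combine Proposition 3.9 (every elementary circuit of $G_Z$ comes from delay-embedding a closed walk on $G_X$) with the minimality assumption on $|\xi^*|$. First, apply Proposition 3.9 to $\xi^*$: there is a closed walk $W\subseteq G_X$ whose delay embedding traces out $\xi^*$. If $W$ is itself an elementary circuit of $G_X$, then by construction $\xi^*$ corresponds to an elementary circuit of $G_X$, i.e.\ $\xi^*\in\Pi$, and we are done. Otherwise, by the standard fact used in the proof of Proposition 3.9 (Lemma 2 of \cite{gleiss2003circuit}), $W$ decomposes as an edge-disjoint union of at least two elementary circuits $c_1,\ldots,c_t$ of $G_X$, with $\sum_{j=1}^t|c_j|=|W|=|\xi^*|$.

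The heart of the argument is then to show that each $c_j$ has length exactly $l$, which by Observation 3.10 places its corresponding delay-embedded elementary circuit in $\Pi_l$. By the definition of $l$ as the minimum length of circuits in $G_X$, each $|c_j|\ge l$. Suppose for contradiction that some $|c_{j_0}|>l$. By the construction in the proof of Proposition 3.9, $c_{j_0}$ lifts to an elementary circuit $\tilde c_{j_0}\in\Xi_Z$ of the same length $|c_{j_0}|$. Since $t\ge 2$ and the lengths are positive and sum to $|\xi^*|$, we have $|\tilde c_{j_0}|=|c_{j_0}|<|\xi^*|$. Combined with $|\tilde c_{j_0}|>l$, this contradicts the minimality of $|\xi^*|$ among circuits in $\Xi_Z$ of length exceeding $l$. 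Hence every $|c_j|=l$, so each $c_j$ lifts to an element of $\Pi_l$, and $\xi^*$ is the combination of these lifts, as required.

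The one point that needs a little care—and is really the only substantive obstacle—is making precise the correspondence between a decomposition of the closed walk $W$ in $G_X$ and a decomposition of $\xi^*$ in $G_Z$, so that "combination of elementary circuits in $\Pi_l$" has the same meaning on both sides. Concretely, I would verify that delay embedding preserves edge-disjoint concatenation: if $W = c_1\oplus\cdots\oplus c_t$ as closed walks sharing only vertices, then the delay-embedded trace of $W$ decomposes correspondingly into the delay-embedded traces of the $c_j$'s. This follows because each step of $\xi^*$ is determined by a length-$m$ window of $W$, and the window-to-edge correspondence commutes with concatenation of closed walks. Once this bookkeeping is in place, the contradiction argument above yields the corollary.
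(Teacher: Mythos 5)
Your argument is correct and follows exactly the route the paper intends: the paper states this corollary without proof as a ``natural extension'' of \cref{obs: min length}, and your proof supplies precisely the missing details, combining \cref{prop: induced elementary circuits}, the lifting construction from the preceding proposition, and the minimality of $|\xi^*|$ to force every constituent circuit to have length $l$. The only quibble is that the decomposition of the closed walk $W$ need not be edge-disjoint in $G_X$ (an edge may be traversed more than once), but only the identity $\sum_j |c_j| = |W| = |\xi^*|$ is used, so the argument stands.
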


Lastly, we need the following proposition, which states that if two elementary circuits in $G_Z$ are separated by a distance of at least $m$, then the two corresponding closed walks in $G_X$ do not share any common nodes.
\begin{proposition}\label{prop: dist >= m}
Let $\xi_X$ and $\xi'_X$ be closed walks on $G_X$. Suppose $\xi_X$ and $\xi'_X$ share no states in common. 
Let $\xi_Z,\xi'_Z$ be the corresponding walks embedded on $G_Z$ of $\xi_X$ and $\xi'_X$ respectively. 
Then $d(\xi_Z,\xi'_Z)\ge m.$  
\end{proposition}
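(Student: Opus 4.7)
The plan is to argue by contradiction, using part 2(b) of \cref{obs: inferring labels} as the main tool. Suppose, toward a contradiction, that $d(\xi_Z, \xi'_Z) < m$. By definition of directed distance, there exist nodes $u \in \xi_Z$ and $v \in \xi'_Z$ with $d(u,v) = d < m$.

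Next I would unpack what membership in $\xi_Z$ and $\xi'_Z$ tells us about the associated tuples. Because $\xi_Z$ is the embedded image of the closed walk $\xi_X$, each node of $\xi_Z$ carries an $m$-tuple $\phi(\cdot)$ whose entries are obtained by reading off $m$ consecutive states along the cyclic repetition $\xi_X \xi_X \xi_X \cdots$. In particular, every entry of $\phi(u)$ is a state appearing in $\xi_X$. Symmetrically, every entry of $\phi(v)$ is a state appearing in $\xi'_X$.

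Now I apply \hyperref[obs: dist < m]{\cref*{obs: inferring labels}.\ref*{obs: dist < m}}(b) to the pair $u,v$ with $d(u,v) = d < m$: writing $\phi(u) = a_1 \cdots a_m$, the tuple of $v$ must begin with $a_{d+1}, \ldots, a_m$, that is, $\phi(v) = (a_{d+1}, \ldots, a_m, \ast, \ldots, \ast)$ for some continuation. Since $d < m$, this shared prefix is non-empty — for instance $a_m$ appears in both $\phi(u)$ and $\phi(v)$. Combining with the previous paragraph, $a_m$ is simultaneously a state of $\xi_X$ (from $u \in \xi_Z$) and a state of $\xi'_X$ (from $v \in \xi'_Z$). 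This contradicts the hypothesis that $\xi_X$ and $\xi'_X$ share no states in common, so $d(\xi_Z,\xi'_Z) \geq m$ as required.

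The only mildly delicate point — and hence the one I would state most carefully — is the claim that every entry of $\phi(u)$ really is a state of $\xi_X$ whenever $u \in \xi_Z$. This is purely a bookkeeping consequence of what ``the corresponding walk embedded on $G_Z$'' means, but it is the place where the specific structure of the delay embedding (sliding an $m$-window over the cyclic repetition of the underlying closed walk) is used, so I would spell it out explicitly rather than leaving it implicit. Everything else is a short symbolic manipulation using the already-proved \cref{obs: inferring labels}.
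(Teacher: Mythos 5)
Your proof is correct and follows essentially the same route as the paper: contradiction via \hyperref[obs: dist < m]{Proposition~\ref*{obs: inferring labels}.\ref*{obs: dist < m}}(b), forcing a shared entry between $\phi(u)$ and $\phi(v)$ and hence a common state of $\xi_X$ and $\xi'_X$. The bookkeeping point you flag (every entry of $\phi(u)$ is a state of $\xi_X$) is indeed used implicitly in the paper's version, so spelling it out is a reasonable refinement but not a departure.
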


\begin{proof}
    Suppose for contradiction, that there exists $u\in\xi_Z, v\in\xi'_Z$ such that $d(u,v) <m$. 
    Without loss of generality, let $\phi(u) = a_1\cdots a_m$ and $\phi(v) = b_1\cdots b_m$.
    From \hyperref[obs: dist < m]{\cref*{obs: inferring labels}.\ref*{obs: dist < m}}, we have $b_1 = a_{d(u,v)+1}$
    and, hence, $\xi_X$ and $\xi'_X$ must share at least one state in common. 
    This contradicts the initial assumption, we therefore conclude that $d(u,v) \ge m$.
\end{proof}

We combine the above results to prove \cref{thm: 2}.
\begin{proof}[Proof of \cref{thm: 2}]
    Every edge in $G_X$ belongs to at least one elementary circuit.
    In the proof of \cref{prop: induced elementary circuits} we show that every elementary circuit in $G_X$ also appears embedded in $G_Z$.
    It follows by induction using  \cref{obs: inferring labels}, \cref{corollary: composition of circuits} and \cref{prop: dist >= m} that \cref{alg: unembed MC} is able to uniquely identify every edge in $G_X$ and recover $\phi$, after a possible relabelling, from elementary circuits of $G_Z$.
\end{proof}


\section{Identifying diffeomorphic iterated function systems}\label{section: identifying conjugate mappings}
We discuss the final step in our methodology: how to construct generators for an IFS that is bundle diffeomorphic to the ground truth IFS.
As established in \cref{section: multi-manifold clustering,section: TDEMCS}, we can learn the different generators in \cref{eq: delay dynamics} and their underlying symbol structure.  
We want to learn a minimal number of generators, equal to the number of generators in the ground truth model. 
It is tempting to try learning such a model by reformulating each mapping $F_{\omega_n,\dots,\omega_{n+l-1}}$ in \cref{eq: delay dynamics} as a composition
$g_{\omega_{n+l-1}}\circ\cdots\circ g_{\omega_n}: \mathbb{R}^l \to \mathbb{R}^l$.
In general this is not possible because the non-autonomous dynamics cannot be disentangled in the delay space.

However, we can learn generators $g_i$ that are compatible with the partial observations.
Following \cref{section: TDEMCS}, we start with the knowledge of the sample $\omega$ associated to the time series of partial observations.
In \cref{subsection: dynamical equivalence} we investigate the relationship between the ground truth IFS and other IFSs that satisfy \cref{eq: delay dynamics} and in \cref{subsection: HDI} we discuss a particular method for finding such models.

\subsection{Dynamical Equivalence} \label{subsection: dynamical equivalence}
Suppose there exists an IFS, different to the ground truth IFS but with the same Markov chain in the base, defined on some compact manifold $M'$ and a function $\psi':M' \to\mathbb{R}$ such that the new observable and ground truth IFS observations both evolve according to \cref{eq: delay dynamics}.
In other words, we assume that both IFSs satisfy the same ``reduced model'' \cite{stepaniants2024discovering}. 
In this case, for generic observations the two systems are bundle diffeomorphic. 
This follows directly from Takens' theorem for IFSs \cite{stark}.

\begin{theorem}\label{thm: HDI}
Consider two IFSs 
\begin{equation*}x_{n+1} = f_{\omega_n}(x_n), \quad y_{n+1} = g_{\omega_n}(y_n),\ n = 0,1,2,\dots
\end{equation*}
where $f_i:M\to M,  g_i:M'\to M'$ for each $i=1,\dots,k$ and $\ \omega_0\omega_1\omega_2\dots\in\Sigma^+_P$. 
 Let \newline $\Omega_n = (\omega_n,\dots,\omega_{n+l-1})$.
Suppose there exists observation functions $\psi: M\to \mathbb{R},$ \newline
$\psi': M'\ \to\mathbb{R}$ and $l\ge 1$ such that $\Phi_{\psi,f,\Omega_{n}}$ and $\Phi_{\psi',g,\Omega_n}$ are embeddings of $M$ and $M'$ resp. and 
that
\begin{equation*}\Phi_{\psi,f,\Omega_{n+1}} \circ f_{\omega_n}\circ\Phi_{\psi,f,\Omega_n}^{-1} = \Phi_{\psi',g,\Omega_{n+1}} \circ g_{\omega_n}\circ\Phi_{\psi',g,\Omega_n}^{-1}.\end{equation*}
 for each admissible $\Omega_n$.
 Then the both systems reduce to \cref{eq: delay dynamics} and hence they are bundle diffeomorphic.
\end{theorem}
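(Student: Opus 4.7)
The plan is to exhibit an explicit family of diffeomorphisms between the fibers of the two IFSs indexed by the sample, and read off the bundle conjugacy directly from the hypothesis. For every admissible $\Omega_n=(\omega_n,\dots,\omega_{n+l-1})$, define
\begin{equation*}
h_{\Omega_n} \;:=\; \Phi_{\psi',g,\Omega_n}^{-1}\circ \Phi_{\psi,f,\Omega_n}\colon M\longrightarrow M'.
\end{equation*}
Since both $\Phi_{\psi,f,\Omega_n}$ and $\Phi_{\psi',g,\Omega_n}$ are embeddings, each is a diffeomorphism onto its image, so $h_{\Omega_n}$ is itself a diffeomorphism; the composition makes sense because the hypothesised equality of the conjugated maps as functions on $\mathbb{R}^l$ forces the two delay images $\Phi_{\psi,f,\Omega_n}(M)$ and $\Phi_{\psi',g,\Omega_n}(M')$ to coincide (this is the common ``reduced-model'' manifold on which \cref{eq: delay dynamics} lives).

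Next I would verify the bundle conjugacy relation. Applying $\Phi_{\psi',g,\Omega_{n+1}}^{-1}$ on the left and $\Phi_{\psi,f,\Omega_n}$ on the right of the identity
\begin{equation*}
\Phi_{\psi,f,\Omega_{n+1}} \circ f_{\omega_n}\circ\Phi_{\psi,f,\Omega_n}^{-1} \;=\; \Phi_{\psi',g,\Omega_{n+1}} \circ g_{\omega_n}\circ\Phi_{\psi',g,\Omega_n}^{-1},
\end{equation*}
one obtains
\begin{equation*}
h_{\Omega_{n+1}} \circ f_{\omega_n} \;=\; g_{\omega_n}\circ h_{\Omega_n},
\end{equation*}
which is exactly the bundle diffeomorphism relation: the conjugating diffeomorphism depends on the driving sample through $\Omega_n$, and it intertwines the two fiber dynamics at every time step. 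The first half of the conclusion (both systems reduce to \cref{eq: delay dynamics}) is then automatic, since by construction the delay dynamics of each IFS is precisely the left- or right-hand side of the hypothesised identity, and these were assumed equal.

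\textbf{Main obstacle.} The only non-trivial point is the implicit claim that the two delay images coincide, making $h_{\Omega_n}$ well defined as a map between manifolds rather than merely as a partial map in $\mathbb{R}^l$. I would handle this by reading the hypothesis as an equality of maps on $\mathbb{R}^l$ restricted to the natural domain of each side: the left-hand side is defined on $\Phi_{\psi,f,\Omega_n}(M)$ and the right-hand side on $\Phi_{\psi',g,\Omega_n}(M')$, and equality as functions forces both domains and both images (under iteration along any admissible $\omega$) to agree. The remaining verification that $h$ is smooth and has smooth inverse is then immediate from the embedding hypothesis and \cref{thm: stark}, and the conclusion follows.
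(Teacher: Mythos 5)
Your proposal is correct and follows essentially the same route as the paper: the paper also passes through the common delay dynamics $F$ of \cref{eq: delay dynamics} and obtains the bundle conjugacy by transitivity, its commutative diagram encoding exactly your maps $h_{\Omega_n}=\Phi_{\psi',g,\Omega_n}^{-1}\circ\Phi_{\psi,f,\Omega_n}$. If anything, you are more explicit than the paper in verifying the intertwining relation $h_{\Omega_{n+1}}\circ f_{\omega_n}=g_{\omega_n}\circ h_{\Omega_n}$ and in flagging the one genuine subtlety (that the hypothesised equality of maps forces the two delay images to coincide, so that $h_{\Omega_n}$ is globally defined), which the paper's proof leaves implicit.
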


\begin{proof}
Let $F$ denote the delay reconstructed system \cref{eq: delay dynamics}. 
Using Takens theorem for IFSs, we have 
$(f,\omega) \longleftrightarrow (F, \omega),$
with $\longleftrightarrow$ denoting the equivalence relation of (smooth) bundle conjugacy \cite{stark}.

Similarly, we have 
$(g,\omega) \longleftrightarrow (F, \omega).$
Applying transitivity of $\longleftrightarrow$, we obtain
\begin{equation*}(f,\omega)\longleftrightarrow (g,\omega).
\end{equation*}
We thus have the following conjugacy diagram:

\begin{center}
    \begin{tikzpicture}[>=triangle 60]
        \matrix[matrix of math nodes,column sep={120pt,between origins},row
          sep={60pt,between origins},nodes={asymmetrical rectangle}] (s)
        { |[name=X1]| M &|[name=X2]| M\\
          |[name=Z1]| \mathbb{R}^l &|[name=Z2]| \mathbb{R}^l\\
          |[name=Y1]| M' &|[name=Y2]| M'\\
        };
        \draw[->] (X1) edge node[auto] {\(f_{\omega_0}\)} (X2)
                (Y1) edge node[auto] {\(g_{\omega_0}\)} (Y2)
                (Z1) edge node[auto] {\(F_{\omega_0,\dots,\omega_{l-1}}\)} (Z2)
                (Y1) edge node[auto] {\(\Phi_{g,\psi',\Omega_0}\)} (Z1)
                (Y2) edge node[right] {\(\Phi_{g,\psi',\Omega_1}\)} (Z2)
                (X1) edge node[left] {\(\Phi_{f,\psi,\Omega_0}\)} (Z1)
                (X2) edge node[auto] {\(\Phi_{f,\psi,\Omega_1}\)} (Z2)
      ;
    \end{tikzpicture}
    \end{center}
\end{proof}

\begin{remark}
    \cref{thm: HDI} and its proof hold analagously for autonomous deterministic dynamical systems with (smooth) topological conjugacies replacing (smooth) bundle conjugacies.
\end{remark}

We finally show that the results in \cref{section: TDEMCS,section: identifying conjugate mappings} above constitute a proof of our main result in \cref{th:main} for an IFS satisfying \cref{hyp: embedding} and the idealisation of unlimited data.

\begin{proof}[Proof of \cref{th:main}]\label{proof: main}
    the first thing we observe is that in a suitable delay space, we can separate the time series into different delay maps. 
    Through section 3 we can identify the original driving signal.
    From Section 4 we can associate the driving signal to a set of generators that is bundle diffeomorphic to the ground truth IFS.

    The final assumption in \cref{hyp: embedding} ensures that the different delay maps are topologically distinct.
    Therefore in theory, given unlimited data, the different delay maps can be separated (with any ambiguous delay vectors which lie in the intersection of two manifolds being removed).
    It follows from \cref{thm: 2} that after separating the different delay maps, we can recover the base dynamics that generated the data.
    Using optimisation techniques, such as those described in \cref{subsection: HDI}, we can discover a set of generators whose evolution in the delay coordinates reduces to \cref{eq: delay dynamics}.
    \Cref{thm: HDI} implies that learnt system is bundle diffeomorphic to the ground truth IFS.

\end{proof}

\subsection{Hidden Dynamics Inference}\label{subsection: HDI}
Inspired by \cite{stepaniants2024discovering}, we use hidden dynamic inference to identify IFS decompositions of the delay embedded system that satisfy \cref{eq: delay dynamics}.

We define a discrete time, multi-functional HDI model with $V$ latent variables and $k$ different functions as follows:
\begin{equation}\label{eg: HDI}\begin{aligned}
    z_{n+1} &= g_{\omega_n}(\omega_n; z_n,h^1_n,\dots,h^V_n), \\
    h^i_{n+1} &= g^i_{\omega_n}(\omega_n; z_n,h^1_n,\dots,h^V_n), \quad i = 1,\dots,V,    
\end{aligned}
\end{equation}
where $\omega_n \in \{1,\dots,k\}$. 

As proposed in \cite{stepaniants2024discovering}, the initial condition can be included in the set of parameters to be optimised. Otherwise, the first few  entries of the observed time series can be used instead.
Additionally, each function in the model is assumed to be a linear combination of some pre-defined basis functions whose coefficients minimise the Mean Square Error (MSE) loss defined below, and are found using gradient-based optimization and automatic differentiation.

Let $\mathbf{p}$ represent the discrete time multifunctional HDI model parameters.
We define the MSE loss, given an input sequence of labels $\underline{\omega} = \{\omega_n\}_{n=0}^{N-1}$ as
\begin{equation}\label{Eq: HDI MSE}
    \text{MSE}(\mathbf{p}) = \sum_{n=1}^{N-1} || g_{\omega_{n}}(z_n,h^1_n,\dots,h^V_n) - z_{n+1}||^2.
\end{equation}
By minimising the MSE and by \cref{thm: HDI}, we effectively learnt an IFS model that is diffeomorphic to the ground truth IFS.

For a partially observed IFS, we set $V+1$ equal to the (maximum) estimated dimension of the sub-manifolds $\Phi_{\psi,f,\underline{i}}(M)$ in \cref{eq: delay embedding map}.
However, models that preserve the dynamics of the observed variable can still be found for various values of $V$.


\section{Numerical Results}\label{section: numerics}
In this section we illustrate our methodology on some numerical examples.
We demonstrate the ability of the proposed framework to discover bundle diffeomorphic systems from time series of partial observations from two different IFSs.
Both systems have nonlinear dynamics. 
The first example uses a Mobius IFS, the second features a hyperbolic IFS.
The results can be reproduced using the source code in \cite{sourcecode}.

\subsection{Curvilinear Sierpinski gasket}
We first study the following Mobius IFS with compact domain $M = \{ z \in \mathbb{C} : |z| \le 1\}$. 
The IFS consists of the following generators, 
\begin{equation*}
  \{f, R\circ f, R^2\circ f\},
\end{equation*}
where $f(z) = \frac{(\sqrt{3}-1)z+1}{-z+\sqrt{3}+1}$ and $R(z)=e^{\nicefrac{2\pi i}{3}}z$. 
The attractor of the IFS is referred to as the curvilinear Sierpinski gasket. 
We assume that each map in the IFS has an equal probability of being chosen and use observation function $\psi(z) = \text{Im}(z)$. 
The entire simulation is shown in \cref{fig: results Apollonian Gasket IFS B}.

We first embed the observations in $\mathbb{R}^3$, and so there are 9 different delay maps which we separate using multi-manifold clustering.
The delay embedding is shown in \cref{fig: Embedded Apollonian Gasket}.
We then unembed the TDE Markov chain and estimate the original Markov chain which generated the data using \cref{alg: unembed MC}. 
Lastly, we use HDI to find an analytic model representation for the observations. We are successfully able to identify an IFS comprised of 3 functions that is approximately bundle diffeomorphic to the original system. 
For this system we introduce a single hidden variable and use a basis of cubic polynomials.
A sample trajectory from the learnt IFS model is displayed in \cref{fig: results Apollonian Gasket IFS C}.

\begin{figure}
        \begin{subfigure}[b]{\linewidth}
            \centering
            \includegraphics[width=0.5\textwidth]{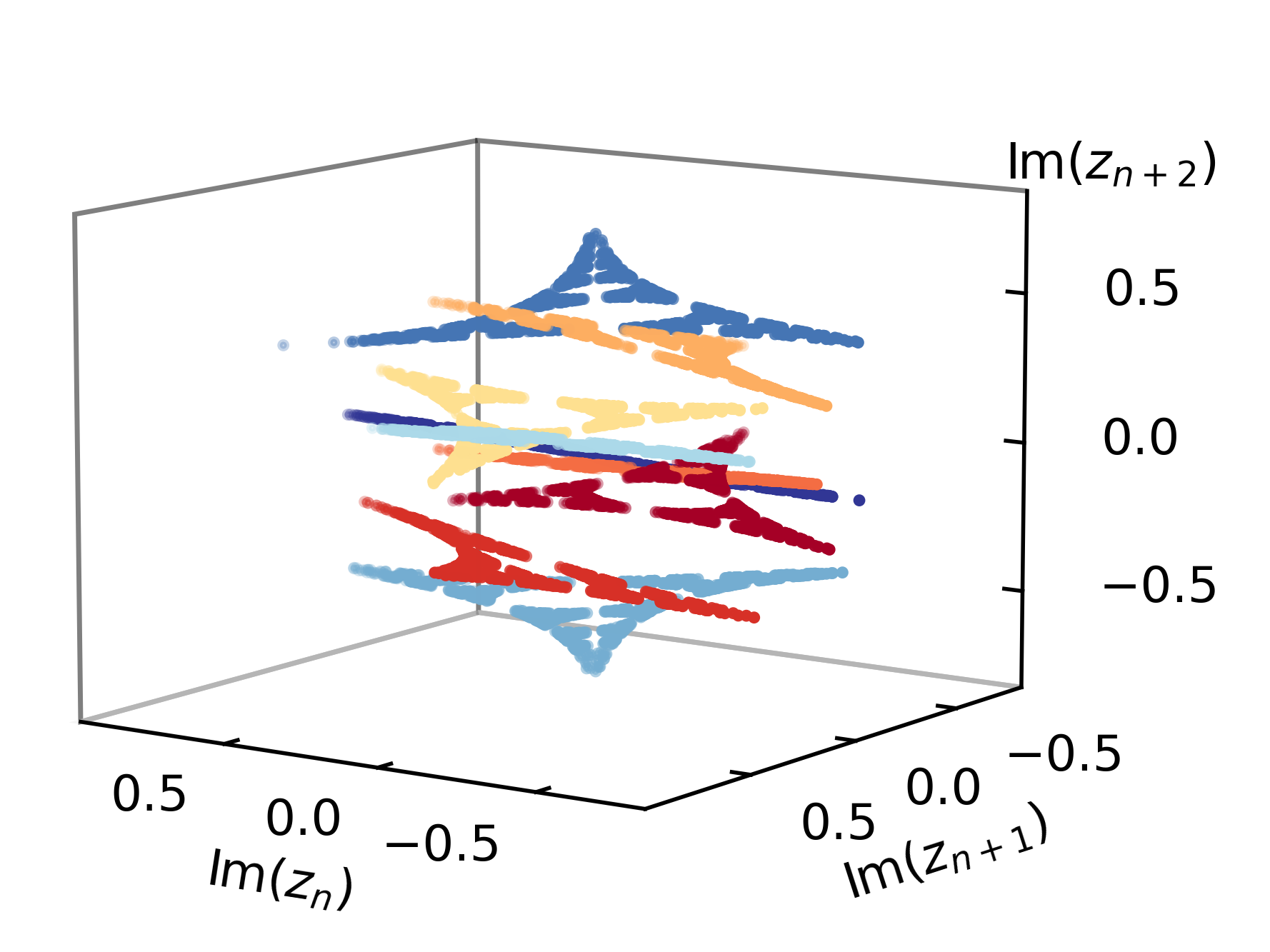}
            \caption{Delay embedding of partially observed Curvilinear Sierpinski IFS. Different colours are used to identify the different delay maps.}
            \label{fig: Embedded Apollonian Gasket}
        \end{subfigure}
        \begin{subfigure}[b]{0.5\linewidth}
            \centering
            \includegraphics[width=\textwidth]{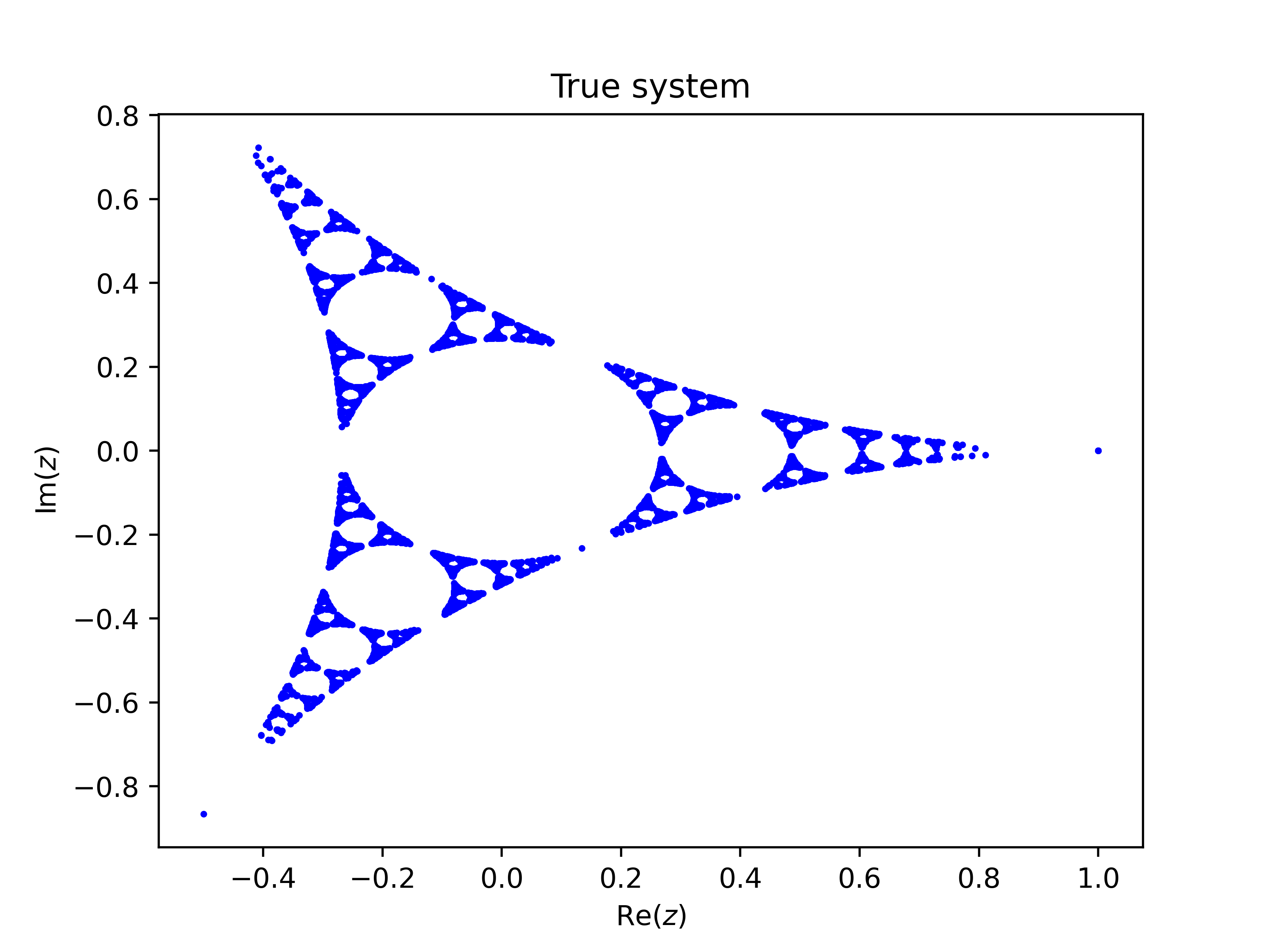}
                \caption{Original system}
                \label{fig: results Apollonian Gasket IFS B}
        \end{subfigure}
        \begin{subfigure}[b]{0.5\linewidth}
            \centering
            \includegraphics[width=\textwidth]{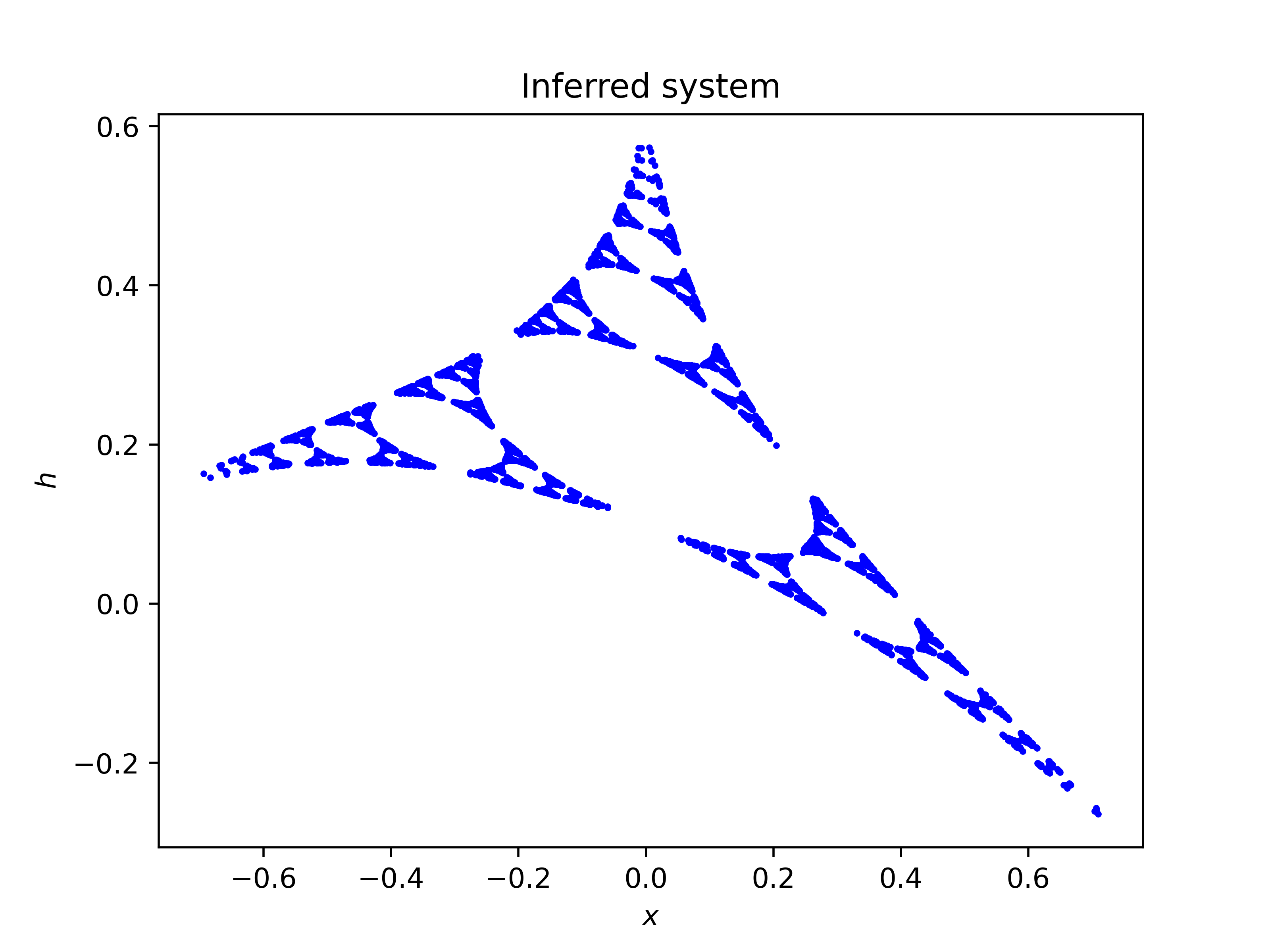}
                \caption{Learnt system, MSE$(\textbf{p}) = 1.3\times 10^{-5}$}
                \label{fig: results Apollonian Gasket IFS C}
        \end{subfigure}
        \caption{Results from the partially observed curvilinear Sierpinski gasket IFS.}
        \label{fig: results Apollonian Gasket}
        \end{figure}

\subsection{H\'enon IFS} \label{subsection: Henon IFS}
We consider the following H\'enon IFS, adapted from \cite{detectingIFS}. The generators of the IFS are:
\begin{equation}
             f_1(x,y) = (y + 1 - 1.2x^2, \ 0.3x), \quad
             f_2(x,y) = (y + 1 - 1.2(x-0.2)^2, \ -0.2x).
\end{equation}
We let $\psi(x,y) = x$ be the observation function and simulate a sample trajectory, shown in \cref{fig: results Henon IFS A}.
For visualisation purposes, the different H\'enon maps in the IFS are displayed in different colours.

Once again, we embed the observed time series in $\mathbb{R}^3$, where we are able to cluster and separate $4$ different $2$-dimensional sub-manifolds and obtain an estimate the original sequence of maps used.
Here, we take the basis functions to be quadratic polynomials and create one hidden variable.
The results of a sample trajectory from the learnt model are shown in \cref{fig: results Henon IFS B}.

\begin{figure}
    \begin{subfigure}[b]{0.5\linewidth}
        \centering
        \includegraphics[width=\textwidth]{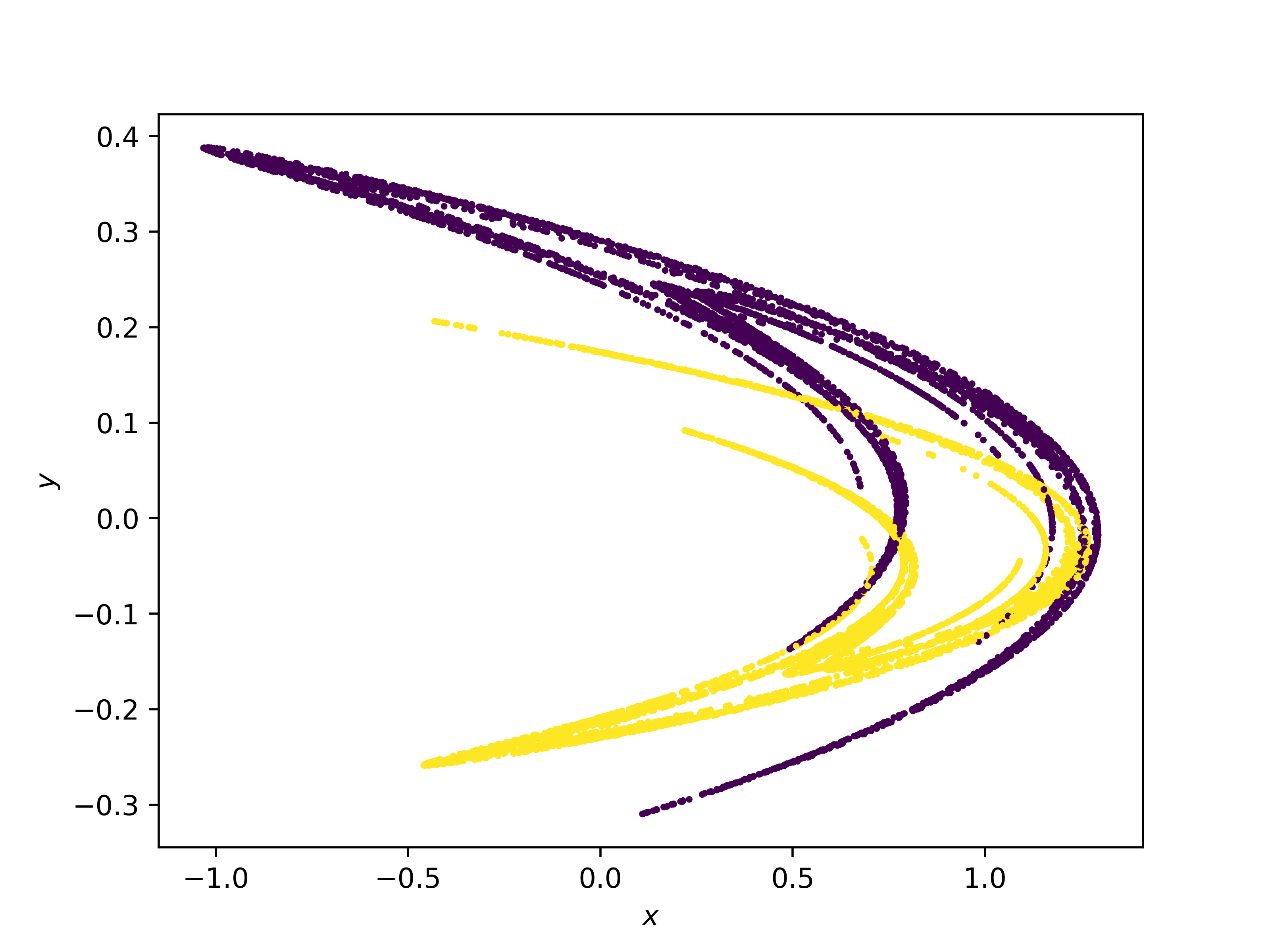}
            \caption{Original system}
            \label{fig: results Henon IFS A}
    \end{subfigure}
    \begin{subfigure}[b]{0.5\linewidth}
        \centering
        \includegraphics[width=\textwidth]{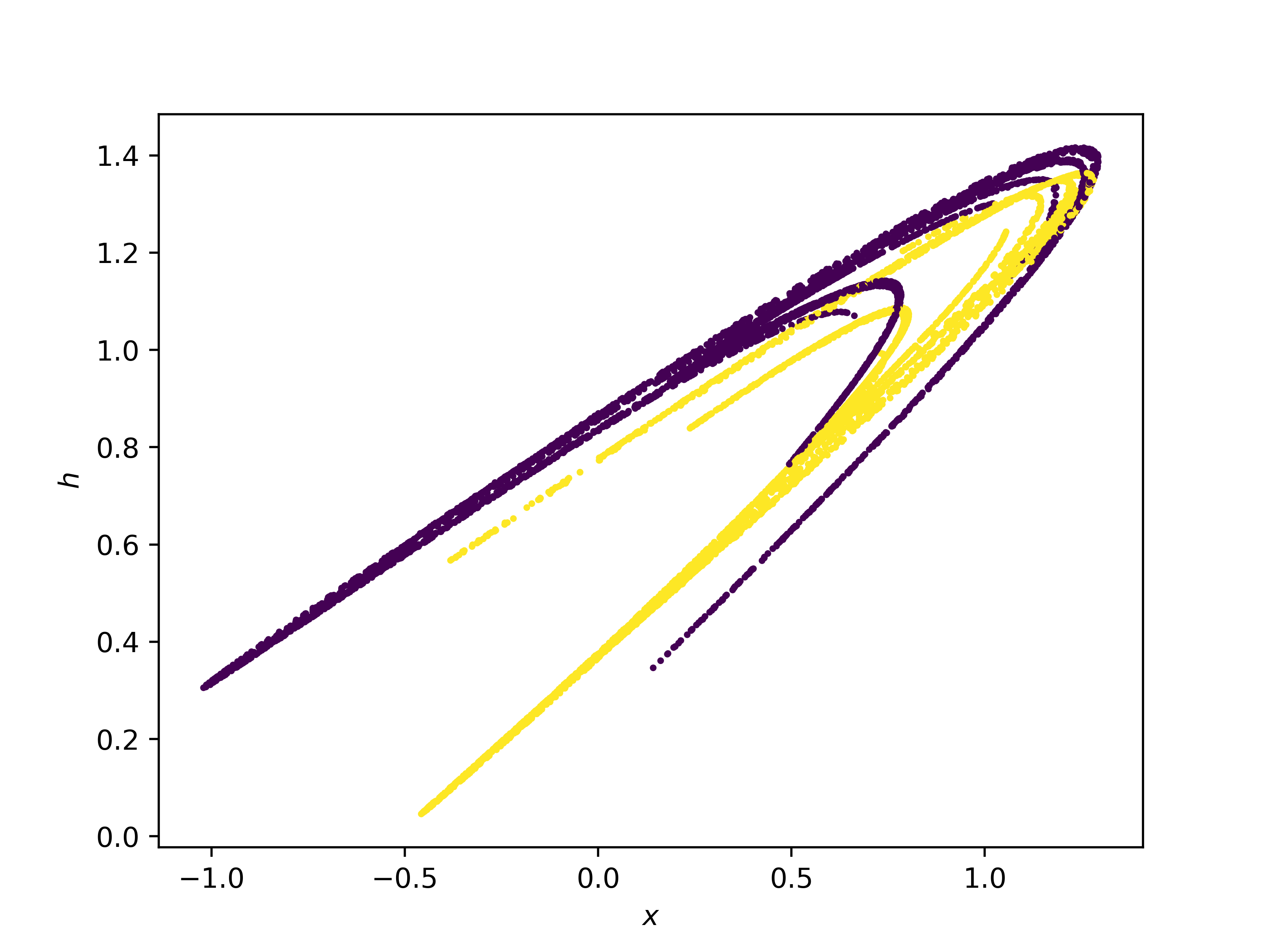}
            \caption{Learnt system, MSE$(\textbf{p}) = 4.5\times 10^{-5}$}
            \label{fig: results Henon IFS B}
    \end{subfigure}
    \caption{Sample trajectories from the ground truth H\'enon IFS and the learnt IFS model.}
    \label{fig: results Henon IFS}
    \end{figure}


\section{Conclusions}\label{section: conclusions}
In this work we develop a data-driven methodology for learning finitely generated iterated function systems from time series data of partial observations.
Although we are motivated by learning random systems, our methodology is directly applicable to non-autonomous systems with a finite number of generators.
Our approach draws upon results and methods from various different areas of mathematics including, time-delay embeddings, manifold learning, directed graphs and Markov chains and model identification.
We highlight differences between delay embeddings of autonomous deterministic systems and partial observations of IFSs.
This study is the first step towards generalising model discovery toolkits to broader classes of random dynamical systems.


\section*{Acknowledgments}
The authors are grateful to Matthew Levine, Matheus M. Castro and Kevin Webster for useful discussions and acknowledge support from the EPSRC Centre for Doctoral Training in Mathematics of Random Systems: Analysis, Modelling and Simulation (EP/S023925/1). 
The research of JSWL has furthermore been supported by EPSRC grants EP/W009455/1,  EP/Y020669/1, EP/Y028872/1 and EP/Z533658/1, as well as by the JST (Moonshot R \& D Grant Number JPMJMS2021) and GUST (Kuwait).


\bibliographystyle{plain}
\bibliography{references}

\end{document}